\def\ra{\rightarrow}
\def\lra{\leftrightarrow}
\def \Z {\mathbb Z}
\def \R {\mathbb R}
\def \N {\mathbb N}
\def\P{\mathbb P}
\def\al{\alpha}
\def\et{\eta}
\def\th{\theta}
\def\thp{\theta^+}
\def\thsm{\theta^{- *}}
\def\Hsm{H^{- *}}
\def\be{\beta}
\def\ep{\varepsilon}
\def\de{\delta}
\def\la{\lambda}
\def\La{\Lambda}
\def\ga{\gamma}
\def\si{\sigma}
\def\de{{\delta}}
\def\om{\omega}
\def\Om{\Omega}
\def\muh{\mu^{(h)}}
\def\Puh{P^{(h)}}
\def\Ph1{P^{(h_1)}}
\def\Ph2{P^{(h_2)}}
\def\b0{{\bf 0}}
\def\bk{{\bf k}}
\def\prl{\parallel}
\newtheorem{thm}{Theorem}[section]
\newtheorem{lem}[thm]{Lemma}
\newtheorem{cor}[thm]{Corollary}
\theoremstyle{plain}
\newtheorem{defn}[thm]{Definition}
\begin{document}

\title{Approximate zero-one laws and sharpness of the percolation transition in a class of
models including 2D Ising percolation}

\author{J. van den Berg\footnote{Part of this research has been funded by the Dutch BSIK/BRICKS project.}, 
  \\
{\small CWI and VUA, Amsterdam, The Netherlands} \\
{\footnotesize email: J.van.den.Berg@cwi.nl}
}
\date{}
\maketitle

\begin{abstract}
One of the most well-known classical results for site percolation on the square lattice
is the equation
$p_c + p_c^* = 1$. In words, this equation means that
for all values $\neq p_c$ of the parameter $p$ the following holds:
Either a.s. there
is an infinite open cluster or a.s. there is an infinite closed `star' cluster.
This result is closely related 
to the percolation transition being sharp:
Below $p_c$ the size of the open cluster of a given vertex
is not only (a.s.) finite, but has a distrubtion with an exponential tail. 
The  analog of this result has been proved by Higuchi in 1993 for two-dimensional Ising percolation,
with fixed inverse temparature
$\beta <\beta_c$, and as parameter the 
external field $h$.

Using sharp-threshold results (approximate zero-one laws) and a modification of an RSW-like result by
Bollob\'as and Riordan, we show that these results hold for a large class of percolation models
where the vertex values can be `nicely' represented (in a sense which will be defined precisely)
by i.i.d. random variables.
We point out that 
the ordinary percolation model belongs obviously to this class, and we show that also the above mentioned
Ising model belongs to it. 

We hope that our results improve insight in the Ising percolation model, and will help
to show that many other (not yet analyzed) weakly dependent percolation models also belong to the above
mentioned class.

\end{abstract}
{\it Key words and phrases:}  percolation, RSW,  sharp transition, approximate zero-one law, sharp thresholds.
{\it AMS subject classification.} 60K35.

\begin{section}{Introduction}
A landmark in the development of percolation theory is the proof by Harry Kesten in 1980(\cite{Ke80})
that the critical probability for bond percolation on the square lattice equals $1/2$.
A key argument in his proof is what would now be called a 'sharp-threshold result': 
He showed that if $n$ is large and
the probabilty of having an open horizontal crossing of an $n \times n$ box is
neither close to $0$, nor close to $1$, 
there is a reasonable probability to have many (in fact, at least of order $\log n$)
so-called pivotal edges (or cut edges).
(These are edges $e$ with the property that changing the state (open/closed) of $e$,
changes the occurrence or non-occurrence of an open horizontal crossing).

The proof of this intermediate key result has a combinatorial-geometric
flavour: it involves a `counting argument', with conditioning on the lowest open crossing (and the left-most
closed dual crossing of the area above the just mentioned open crossing).
This result in turn implies that the derivative
(w.r.t. the parameter $p$) of the crossing probability is very large (also at least of order
$\log n$) if $n$ is very
large. 
Since probabilities are at most $1$, it is impossible to have such behaviour for all $p$ throughout some interval of
non-zero length.  On the other hand, other arguments show that the above mentioned crossing
probability is bounded away from $0$ and $1$, uniformly in $n$ and $p \in (1/2, p_c)$.
Hence $p_c$ must be equal to $1/2$. (It was already known
(\cite{Ha}) that $p_c \geq 1/2$).

Soon after Kesten's result it was shown (\cite{Ru81} and \cite{Wi}) that
his arguments can also be used to prove related long-standing
conjectures, in particular that $p_c + p_c^* = 1$ for site percolation on the square lattice. Here $p_c^*$
denotes the critical probability for the so-called matching lattice (or star lattice) of the square lattice:
this is the lattice with the same vertices
as the square lattice, but where each vertex $(x,y)$ has not only edges to its four horizontal or vertical
`neighbours' $\{(x',y') \, : \, |x-x'| + |y - y'| = 1\}$ but also to the nearest vertices in the diagonal directions, 
$\{(x',y') \, :\, |x - x'| = |y - y'| = 1$.

Russo (\cite{Ru82}) was the first who put the above mentioned `sharp-threshold' argument
of Kesten in a more general framework by 
formulating an {\it approximate zero-one law}.
This approximate zero-one law itself is {\it not} a
percolation result.
It is, as the name indicates, a 'finite' approximation of Kolmogorov's
zero-one law: 
Recall that the latter says (somewhat informally) that if $X_1, X_2, \cdots$
are i.i.d. Bernoulli random variables (say, with parameter $p$) and $A$ is an event with the property that
the occurrence or non-occurrence of $A$ can not be changed by changing a single $X_i$, then $A$ has
probability $0$ or $1$. Russo's approximate law says that if $A$ is an increasing event with the property
that for each $i$ and $p$ the probability that changing the state of $X_i$ disturbs the occurrence or
non-occurrence of $A$ is very small, then for all $p$, except on a very small interval, the probability
of $A$ is close to $0$ or close to $1$.

Given this approximate zero-one law, the combinatorial-geometric argument in Kesten's work discussed
above (to get a lower bound for the
(expected) number of pivotal items), can be (and was, in Russo's paper) replaced by the considerably
simpler (and `smoother') 
argument that the probability that a given edge (or, for site percolation, site) is pivotal is small
when $n$ is large. 

It should be noted that for the other, more standard part of the proof, the approximate
zero-one law does not help: A `separate' argument of the form that if the probability of an open crossing of a square is
sufficiently large,
there is an infinite open cluster, remains needed.
For this argument, which is often
called a `finite-size criterion', 
the RSW theorem (\cite{Ru78} and \cite{SeWe78}; see also \cite{Gr}, Chapter 11) is  essential.
Informally, this theorem gives a suitable lower bound for the probability of
having an open crossing (in the `long' direction) of a $2 n \times n$ rectangle, in terms of the probability
of having a crossing of an $n \times n$ square. The classical proof uses conditioning on the lowest crossing.
For ordinary Bernoulli percolation this works fine, but, as remarked earlier, in dependent models such conditioning
often leads to very serious, if not unsolvable, problems. An important recent achievement in this respect is
a `box-crossing' theorem obtained in \cite{BoRi2}, of which the proof is much more robust than that of the `classical'
RSW result and does not use such conditioning. We will come back to this later.

Refinements, generalizations, and independent results with partly the same flavour as Russo's approximate
zero-one law have been obtained and/or applied by Kahn, Kalai and Linial
(\cite{KaKaLi}), Talagrand (\cite{Ta}), Friedgut and Kalai (\cite{FrKa}), Bollob\'as and Riordan 
(see e.g. \cite{BoRi1}), Graham and Grimmett
(\cite{GrGr}), Rossignol (\cite{Ro}) and others, and have become known under the name `sharp threshold results'.

Although such results take Kesten's key argument
in a more
general context, involving `less geometry', Kesten's proof is essentially still the shortest and, from a 
probabilistic point of view, intuitively most appealing, self-contained proof of $p_c = 1/2$ for bond
percolation on the square lattice:
none of the above-mentioned general sharp-threshold theorems has a short or probabilistically intuitive
proof. Moreover, the combinatorial-geometric ideas and techniques in Kesten's proof have turned
out to be very
fruitful in other situations, for instance in the proof of one of the main results in
Kestens paper on scaling relations
for 2D percolation (\cite{Ke87}).

On the other hand, there are examples of percolation models where
Kesten's argument is difficult and painful to carry
out, or where it is even not (yet) known how to do this. An axample of
the latter is the Voronoi percolation model, for which Bollob\'as and Riordan (\cite{BoRi2}; see also
\cite{BoRi3})) proved (using
a sharp threshold result from \cite{FrKa}) that
it has critical probability $1/2$. This had been conjectured for a long time, but carrying out Kesten's
strategy for that model led to (so far) unsolved problems. 

An example of the former is percolation of $+$ spins in
the 2D Ising model with
fixed inverse temperature $\beta < \beta_c$, and with external field parameter $h$ (which plays the role of $p$ in
'ordinary' percolation). Higuchi (\cite{Hi1} and \cite{Hi2}) showed that for all values of $h$,
except the critical value
$h_c$, either (a.s.) there is an infinite cluster of vertices with spin $+$, or (a.s) there is an 
infinite * cluster (that is, a cluster in the * lattice) of vertices with spin $-$. (The result is stated
in \cite{Hi2} but much of the work needed in the proof is done in \cite{Hi1}). Higuchi followed globally 
Kesten's arguments. However, to carry them out (in particular the `counting under conditioning on the lowest
crossing' etc.) he had to overcome several new technical difficulties due to the dependencies in this model.
This makes the proof far from easy to read.

\smallskip\noindent
{\bf Remark:}
It should be mentioned here that there is also a very different proof of $p_c = 1/2$ for bond percolation on
$\Z^2$ (and
$p_c + p_c^* = 1$ for site percolation), namely by
using the work of Menshikov (\cite{Me}) and of Aizenman and Barsky (\cite{AiBa}).
They gave
a more `direct' (not meaning `short' or `simple') proof, without using the results or arguments indicated above,
that for every $d \geq 1$ the
cluster radius distribution for independent percolation on $\Z^d$ with $p < p_c$ has an exponential tail.
However, their proofs make crucially use of the BK inequality (\cite{BeKe}), and
since our interest is mainly in dependent percolation models (for which no suitable analog of this
inequality seems to be available), these proof methods will no be discussed in more detail here.

\smallskip
In the current paper we present a theorem (Theorem \ref{mainthm}) which
says that the analog of $p_c + p_c* = 1$ holds for a large
class of weakly dependent 2D percolation models.  Roughly speaking, this class consists of sytems that
have a proper, monotone, `finitary' representation in terms of i.i.d. random variables.
From the precise definitions it
will be immediately clear that it contains the ordinary (Bernoulli) percolation models. We give,
(using results obtained
in the early nineties by Martinelli and Olivieri (\cite{MaOl}), and modifications of results in (\cite{BeSt})
which were partly inspired by \cite{PrWi}), a `construction' of the earlier mentioned 2D Ising model
which shows that this model also belongs to this class.
We hope that this will improve our understanding of 2D Ising percolation,
and that it will help to find alternative constructions of other natural percolation models, which bring
them in the context of our theorem.

\smallskip\noindent
The theorem is based on: \\
(a) One of the sharp-threshold
results mentioned above, namely 
Corollaries 1.2 and 1.3 in \cite{Ta} , which are close in spirit to, but quantitatively more explicit than,
Russo's
approximate zero-one law. The reason for using the results in \cite{Ta} rather than those in \cite{FrKa} (which,
as said above, were applied to percolation problems by Bollob\'as and Rirordan) is that the latter assume
certain symmetry properties on the events to which they are applied. In many situations this
causes no essential difficulties, but it gives much more flexibility to allow an absence of
such symmetries (see the Remark following propery (iv) near the end of Section 2.1). \\
(b) A modification/improvement (obtained in \cite{BeBrVa}), of an RSW-like box-crossing theorem
of Bollob\'as and Riordan (\cite{BoRi2}). As indicated in the short discussion of Russo's paper above, some form
of RSW theorem seems unavoidable. For many dependent percolation  models it is very hard (or maybe impossible)
to carry out the original proof of RSW. The Bollob\'as-Riordan form of RSW (and its modification in \cite{BeBrVa}) is weaker
(but still strong enough) and much more robust with respect to spatial dependencies.

\smallskip
In Section 2 we introduce some terminology and state Theorem \ref{mainthm}, which says that a large
class of 2D percolation models satisfies an analog of $p_c + p_c^* = 1$. We also state some
consequences/examples of the theorem
In particular we show that the Ising percolation result by Higuchi satisfies the conditions of 
Theorem \ref{mainthm}.

In Section 3 we state preliminaries needed in the proof of Theorem \ref{mainthm}: Talagrand's result
mentioned above, and an extension of his result to the case where the underlying random variables
can take more than two different values, and where the events under consideration not necessarily
depend on only finitely many of these underlying variables.
In that Section we also explain that the earlier mentioned (modification of the)
RSW-result of Bollob\'as and Riordan applies to our class of percolation models, and we prove other properties
that are used in the proof of Theorem \ref{mainthm}.
 
In Section 4 we finally prove Theorem \ref{mainthm}, using the ingredients explained in Section 3.

Apart from the proofs of the RSW-like theorem and of Talagrand's sharp threshold result mentioned above,
the proof of Theorem \ref{mainthm} is practically self-contained.

\end{section}

\begin{section}{Statement of the main theorem and some corollaries}
\begin{subsection}{Terminology and set-up} \label{term}
In this subsection we will describe the (dependent) percolation models on the square lattice for
which our main result, a generalization of the well-known $p_c + p_c^* = 1$ for ordinary percolation, holds.

\smallskip\noindent
Let $k$ be a positive integer and let $\mu^{(h)}, \, h \in \R$ be a family of probability measures on
$\{0, 1, \cdots, k\}$, indexed by the parameter $h$, with the following two properties ((a) and (b)): \\

\begin{eqnarray} \nonumber
\mbox{ (a) } & & \mbox{For each } 1 \leq j \leq k, \muh(\{j, \cdots, k\}) \mbox{ is a
continuously differentiable }, \\ \nonumber
\,& & \mbox{ strictly increasing function of } h. \\ \nonumber
\mbox{ (b) } & & \lim_{h \ra \infty} \muh(k) = \lim_{h \ra -\infty}\muh(0) = 1. \nonumber
\end{eqnarray}

Let $I$ be a countable set. Before we go on we need some notation and a
definition: We use '$\subset \subset$' to indicate `finite subset of'. 
The special elements $(0, 0, 0, \cdots)$ and $(k, k, k, \cdots)$ of 
$\{0, \cdots,k\}^I$ are denoted by $\b0$ and $\bk$ respectively.

Let $f \,:\, \{0, \cdots,k\}^I \ra \R$ be
a function.
Let $V \subset \subset I$ and let $y \in \{0, \cdots,k\}^V$.
For $x \in \{0, \cdots,k\}^I$,
we write $x_V$ for the `tuple' $(x_i \, i\in V)$.
We say that $y$ determines (the value of) $f$ if
$f(x) = f(x')$ for all $x, x'$ with $x_V = x'_V = y$.

\smallskip\noindent
Let $X_i, i \in I$ be independent random variables, each with distribution $\muh$. Let $\Puh$ denote
the joint distribution of the $X_i$'s. The $X_i$'s will be
the `underlying' i.i.d. random variables for our percolation system. We will assume that
the
`actual spin variables', which take values $+1$ (`open') and $-1$ (`closed') and which will be denoted
by $\si_v, \, v \in \Z^2$ below, are `suitably described'
in terms of the underlying $X$ variables: For each $v \in \Z^2$, its spin variable $\si_v$ is a function of the
$(X_i, i \in I)$. These functions themselves do not depend on $h$, but changing $h$ will change the
distribution of the $X$ variables and thus that of the $\si$ variables. More precisely, we assume that
$\si_v, v \in \Z^2$ are random variables with
the following properties ((i) - (iv) below):

\begin{itemize}
\item (i) ({\it Monotonicity.}) For each $v$, $\si_v$ is a measurable, increasing, $\{-1, +1\}$-valued
function of the collection
$(X_i, \, i \in I)$. Moreover, for each $v \in \Z^2$,  $\si_v(\b0) = -1$ and $\si_v(\bk) = +1$.

\item (ii) ({\it Finitary representation.}) There are $C_0 > 0$ and $\ga > 0$ such that for each 
$v \in \Z^2$ there is a sequence $i_1(v), i_2(v), \cdots$ of elements of $I$ such that
for all positive integers $m$, and all $h \in \R$,

$$\Puh\left((X_{i_1(v)}, \cdots, X_{i_m(v)}) \mbox{ does not determine } \si_v\right) 
\leq 
\frac{C_0}{m^{2 + \ga}}.
$$

\item
(iii) ({\it Mixing}. \\
$$ \exists \al > 0 \,\, \forall v , w \in \Z^2 \,\, \forall m < \al ||v-w||, \, \,
\{i_1(v), \cdots, i_m(v)\} \cap \{i_1(w), \cdots, i_m(w)\} = \emptyset.$$

\item (iv) For each $h$, the distribution of $(\si_v, v \in \Z^2)$ is translation invariant and invariant under
rotations over $90$ degrees, and under vertical and horizontal axis reflection.
\end{itemize}

{\bf Remark:} Note that, in property (iv), we do not require that
we can identify $I$ with $\Z^2$ in such a way that there is a stationary mapping (that is, a mapping which commutes
with shifts) from the process
$(X_i, i \in \Z^2)$ to the process $(\si_v, v \in \Z^2)$. In many cases there will be such identification,
but we found its requirement unnecessarily strong for our purposes (see for instance the example of the
Ising model below, where the mapping under consideration is not of this form).
A consequence of the absence of this requirement is an absence of certain symmetries needed to apply the
sharp thereshold results in, e.g. \cite{FrKa}. This is the main reason for using the results in \cite{Ta}.
\smallskip\noindent
\begin{defn}\label{maindef}
If a random field $(\si_v, v \in \Z^2)$ has the properties (i) - (iv) above, we say that the process has a {\it
nice, finitary} representation (in terms of the $X$ process, and with parameter $h \in \R$).
\end{defn}

\end{subsection}

\begin{subsection}{Statement of the main theorem and some special cases}
\smallskip\noindent
Now we consider percolation in terms of the $\si$ variables: We interpret $\si_v=+1$ ($-1$) as the vertex $v$ being
open (closed) and are interested in (among other things) the existence of infinite paths on which every
vertex is open. As usual, in our notion of `ordinary' paths we allow only horizontal and vertical steps, and
we use the term star paths when, in addition to these steps, also diagonal steps are allowed. Similarly (and again
following the usual conventions), we define `ordinary' clusters as well as star clusters. When we speak
simply of `cluster', we will always mean an `ordinary' cluster.

The $+$ cluster of a vertex $v$ will be denoted by $C_v^+$; the $- *$ cluster of $v$ (that is, the $-$ cluster of $v$
in the star lattice) will be denoted by $C_v^{- *}$,
etc. If $v = 0$ (the vertex $(0,0)$) we will often
omit the subscript $v$.

Recall that $\Puh$ denotes the probability distribution of the collection $(X_i, \, i \in I)$. We will
also use it for
the probability measure on $\{-1, +1\}^{\Z^2}$ induced by the map from the $X$ variables to the $\si$ variables.
Since the context in which it is used will always be clear, this should not cause any confusion.

\begin{thm}\label{mainthm}
Let $(\si_v, v \in \Z^2$) be a spin system with  a nice, finitary representation, with parameter
$h \in \R$ (in the sense of Definition \ref{maindef}).\\
Then there is a {\it critical value} $h_c$ of $h$ such that:  

\smallskip\noindent
(a) $\forall h > h_c \,\,\Puh(|C^+| = \infty) > 0$ and the distribution of $|C^{- *}|$ has
an exponential tail. \\
(b)$\forall h < h_c \,\, \Puh(|C^{- *}| = \infty) > 0$ and the distribution of $|C^+|$
has an exponential tail. 

\end{thm}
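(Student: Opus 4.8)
The plan is to follow the classical Russo–Kesten–Wierman strategy, but with all combinatorial "counting under conditioning on the lowest crossing" replaced by (a) the sharp-threshold result of Talagrand and (b) the Bollobás–Riordan box-crossing theorem (in the form available from the earlier sections). Throughout, for an $n\times n$ box $B_n$ write $\theta_n(h)$ for $\Puh$ of the event of an open horizontal crossing of $B_n$, and similarly $\theta_n^{-*}(h)$ for a closed star crossing in the vertical direction. By property (iv) and planar duality, for each $n$ and $h$ one has $\theta_n(h) + \theta_n^{-*}(h) = 1$ (a vertical closed star crossing exists iff no horizontal open crossing does). Monotonicity (property (i)) makes $\theta_n(h)$ nondecreasing in $h$, and property (b) on the $\muh$ forces $\theta_n(h)\to 1$ as $h\to\infty$ and $\theta_n(h)\to 0$ as $h\to-\infty$. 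Define $h_c := \inf\{h : \liminf_n \theta_n(h) \ge 1-\epsilon_0\}$ for a suitable small absolute constant $\epsilon_0$ coming from the RSW/finite-size-criterion input; one then has to show this is the right critical point and that it does not depend on the (inessential) choice of $\epsilon_0$.

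The first main step is the sharp-threshold input. Fix a large $n$. The event "open horizontal crossing of $B_n$" is increasing in the $\si$-variables, hence (by monotonicity of the representation) increasing in the underlying $X_i$'s. Its influences are controlled as follows: if $X_i$ is pivotal for the crossing, then there must be an open path from the left side to (a neighbourhood of) the site(s) whose spin $X_i$ can affect, and a disjoint closed star path on the other side; using the finitary-representation bound (property (ii)) to say that $X_i$ only influences $\si_v$ for $v$ "close" to a bounded region with overwhelming probability, plus a one-arm-type estimate from RSW to bound $\Puh(\text{long open arm})$, one shows each influence is at most $n^{-c}$ for some $c>0$ uniformly in $i$, provided $\theta_n(h)$ is bounded away from $0$ and $1$. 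Feeding this into Talagrand's Corollary 1.2/1.3 (in the extended form allowing $\{0,\dots,k\}$-valued variables and events not depending on finitely many coordinates, as set up in Section 3) yields: there is a length $\ell(n) = O(1/\log n)\to 0$ such that the set of $h$ with $\theta_n(h)\in(\epsilon_0, 1-\epsilon_0)$ has Lebesgue measure at most $\ell(n)$.

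The second main step is the finite-size criterion: there is an absolute $\epsilon_0>0$ such that $\theta_n(h) > 1-\epsilon_0$ for some $n$ implies $\Puh(|C^+|=\infty)>0$, and moreover the standard block/renormalization argument together with the Bollobás–Riordan box-crossing theorem upgrades "$\theta_n(h)$ close to $1$ for one large $n$" to "crossing probabilities of all large rectangles are close to $1$, with stretched-exponentially small failure probability", which in turn gives the exponential tail for the dual object $|C^{-*}|$ — one surrounds the origin by a sequence of closed-star "circuits" in annuli of doubling size, each present with probability exponentially small in its scale, and a Borel–Cantelli/geometric-series estimate converts this into $\Puh(|C^{-*}|\ge N)\le e^{-cN^{a}}$, then boot-strapped to a genuine exponential tail by the usual argument (or one simply states the exponential tail for the radius, which suffices). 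The mixing property (iii) is what makes the block events in disjoint annuli sufficiently independent for this renormalization to run. Combining the two steps: for $h > h_c$ (by definition of $h_c$ and the measure-$\ell(n)$ bound, $\theta_n(h)$ must exceed $1-\epsilon_0$ for infinitely many $n$, since otherwise a whole interval to the right of $h_c$ would lie in the "intermediate" set, contradicting $\ell(n)\to 0$), we get the conclusions of part (a); by the duality $\theta_n + \theta_n^{-*} = 1$ and the symmetry of the setup under interchanging $+\leftrightarrow -$, "$h<h_c$" gives part (b) by the same argument applied to the closed star system.

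The hard part, and the reason the paper needs its own Section 3, will be step one — verifying that the influences of the underlying i.i.d. variables on the crossing event are genuinely small. In Bernoulli percolation this is immediate (a site is pivotal only via a four-arm event at that site). Here $X_i$ is not attached to a single vertex; it can in principle affect the spins of many, far-apart vertices, and the crossing event depends on infinitely many $X_i$'s. One must therefore carefully combine: (a) the polynomial tail in property (ii), which says $X_i$ typically only matters for $\si_v$ with $v$ in a bounded window, (b) a truncation turning the infinite-coordinate event into a finitely-supported one with small error, and (c) an RSW-derived a-priori bound showing that conditional on $X_i$ being relevant to some $\si_v$, the probability that this $v$ is connected by an open path and a disjoint closed-star path far across the box is polynomially small. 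Making (a)–(c) quantitatively compatible — i.e. choosing the truncation scale as a function of $n$ so that all three errors are $n^{-c}$ — is the technical crux, and it is exactly where the exponents $\gamma$ and $\alpha$ in the hypotheses (and the robustness of the Bollobás–Riordan RSW) are needed.
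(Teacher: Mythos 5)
Your overall architecture (sharp threshold $+$ weak RSW $+$ finite-size criterion) matches the paper's, and you correctly identify the influence bound as the crux. But your route to that bound has a genuine gap. You propose to show each influence is at most $n^{-c}$ by arguing that pivotality of $X_i$ forces an open arm (and a disjoint closed-star arm) across a macroscopic distance, and then invoking ``a one-arm-type estimate from RSW.'' No such quantitative estimate is available in this setting. The only RSW statement at hand (Lemma \ref{RSW-lem} / Corollary \ref{RSW-cor}) is purely qualitative --- a $\limsup$/$\liminf$ dichotomy for crossing probabilities --- and does not give crossing probabilities of annuli bounded below uniformly in scale; moreover, even granting such bounds, converting nested circuits into polynomial arm decay normally uses the BK inequality, which is exactly the tool unavailable for these dependent models (as the introduction stresses). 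The paper avoids this entirely: part (b) is proved by contradiction, assuming $\theta(h) = \theta^{-*}(h) = 0$ for all $h$ in a fixed interval $[h_1,h_2]$, and the influence of an index $j$ is bounded by a sum over vertices $v$ of $\min\bigl(\Puh(v \mbox{ lies on a crossing}),\, \Puh(v \mbox{ needs } j)\bigr)$; the first factor is at most the one-arm probability $f(n) = P^{(h_2)}(0 \lra \partial B(n))$, which tends to $0$ \emph{with no rate required} simply because $\theta(h_2)=0$ under the contradiction hypothesis, while the second is controlled by property (ii) combined with a rank/counting argument based on property (iii). Talagrand's inequality over the fixed-length interval $[h_1,h_2]$ then needs only $\ep_n \ra 0$, not $\ep_n \le n^{-c}$, to contradict the crossing probability being stuck in $(\de,1-\de)$.

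Two smaller discrepancies. First, your part (a) invokes the sharp-threshold machinery; the paper's part (a) does not use it at all --- for $h$ above $h_c := \sup\{h : \theta(h)=0\}$ one has $\theta(h)>0$, and a square-root-trick/circuit argument plus Corollary \ref{RSW-cor} pushes $\Puh(H(3n,n))$ to $1$, after which Lemma \ref{fs-lem} gives the exponential tail of $|C^{-*}|$ directly via the lattice-animal renormalization (no stretched-exponential intermediate step or bootstrap is needed). Second, your claim that part (b) follows ``by the same argument applied to the closed star system'' is exactly the step the paper warns cannot be obtained by symmetry alone: one must first prove $\theta^{-*}(h)>0$ for all $h<h_c$, and that is precisely where the approximate zero-one law enters. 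Your measure-$\ell(n)$ statement points in the right direction, but you would still need to produce, for the whole fixed interval of $h$'s simultaneously, a common sequence of scales $n_i$ on which the crossing probability lies in $(\de,1-\de)$ --- the paper extracts this from the finite-size criterion applied in both directions together with Lemma \ref{RSW-lem} --- before any contradiction can be drawn.
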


{\bf Remark:} Note that it follows from the statement of the theorem that $h_c$ satisfies
$$h_c = \inf\{h : \Puh(|C^+| = \infty) > 0\} = \sup\{h : \Puh(|C^{- *}| = \infty) > 0\}.$$
Also note that if reversal of $h$ corresponds with a spin-flip (more precisely, if
the distribution of $\si$ under $\Puh$ is the same as the distribution of $-\si$ 
($ = (\si_v, v \in \Z^2)$) under
$P^{(-h)}$, the above theorem immediately implies

\begin{equation}
\label{mat-rel}
h_c + h^*_c = 0, \mbox{ where }
\end{equation}

$$h^*_c = \inf\{h : \Puh(|C^{+\, *}| = \infty) > 0\}.$$

\begin{subsubsection}{Special cases} \label{spc}
{\bf Bernoulli site percolation on the square lattice, with parameter $p$.} \\
This model, where the vertices are open ($+1$) with probability $p$ and closed ($-1$) with probability
$1-p$ trivially satisfies the conditions of Theorem \ref{mainthm}: Simply take $I = \Z^2$, $k=1$ (that is,
the $X_i$'s take values $0$ and $1$), and $\si_v = 2 X_v - 1$, $v \in \Z^2$. Finally, take for instance
(note that we want $\muh(1)$ to go $1$ (respectively $0$) as $h \ra \infty$ ($-\infty$))

$$\muh(1) = \frac{\exp(h)}{\exp(h) + \exp(-h)}.$$

Taking $p = \muh(1)$ completes the 'translation'. It is easy to see that reversing $h$ corresponds 
with a spin-flip, so that \eqref{mat-rel} holds, which is equivalent to the well-known

$$p_c + p_c^* = 1,$$ 

for this model.

\medskip\noindent
{\bf Models defined explicitly in terms of i.i.d. random variables} \\
In the previous example, the representation in terms of i.i.d. random variables was explicit and trivial.
It is easy to find many other examples with explicit (but less trivial) representations.
For instance, take $I = \Z^2$, and let the $X$ variables be i.i.d. Bernoulli with parameter $p$. Define,
for each $v \in \Z^2$, $\si_v$ as follows: Consider, for each $n$, the difference between the number of $1$'s
and the number of $0$'s in the $2 n \times 2 n$ square centered at $v$. Take the smallest $n$ where this difference
has absolute value larger than some constant, say $5$. Define $\si_v$ as the
sign of the above mentioned difference (number of $1$'s minus
number of $0$'s) for that $n$. It is easy to check that this definition corresponds with a nice, finitary
representation in the sense of Definition \ref{maindef}. More interesting (in the context of the subject of this paper)
are those weakly dependent models that are not apriori explicitly defined in terms of such 
representation. One can then search for a possible `hidden' representation. A major example where this works is
the following.

\medskip\noindent
{\bf Ising model with (fixed) inverse temperature $\be < \be_c$ and external field parameter $h$} \\
We first recall some definitions and standard results for these models.
Ising measures $\mu_{\be,h}$ on $\{-1,+1\}^{\Z^2}$, with inverse temperature $\beta \in [0, \infty)$ and external field
$h \in (-\infty, \infty)$ are  probability
measures that satisfy, for $\et \in \{0,1\}$ and $v \in \Z^2$,

\begin{equation}
\label{Is-def}
\mu_{\be,h}(\si_v = \et \, | \, \si_w, \, w \neq v) = \frac{\exp\left(\be \et (h + \sum_{w \sim v} \si_w)\right)}
{\exp\left(\be \et (h + \sum_{w \sim v} \si_w)\right) + \exp\left(- \be \et (h + \sum_{w \sim v} \si_w)\right)},
\end{equation}  
where $w \sim v$ means that $||v-w|| := |v_1 - w_1| + |v_2 - w_2| =1$.

It is well-known that there is a critical value $\be_c$ such that for $\be < \be_c$ there is a unique measure
satisfying \eqref{Is-def}, while for $\be > \be_c$ there is more than one such measure.

The Ising model is one of the most well-known examples of a Markov random field:
the conditional distribution of the spin value of a vertex $v$, given the
spin values of all other vertices, depends only on the spin values of the neighbours of $v$. 

The `single-site' conditional distributions in \eqref{Is-def} will be used often in the remainder of this
subsection and will be denoted by $q_v^{\al}$. More precisely, let for $v \in \Z^2$, $\partial v$ denote the set
of (four) vertices that are neighbours of $v$. Further, for $\al \in \{-1,+1\}^{Z^2}$ and $V \subset \subset \Z^2$,
let $\al_V$ denote the 'restriction' of $\al$ to $V$; that is,
$\al_V = (\al_w, w \in V)$.
For $\al \in \{-1, +1\}^{\partial v}$ and $\et \in \{-1, +1\}$ we
define $q_v^{\al}(\et)$ as the conditional probability that $\si_v$ equals $+1$ given that $\si_{\partial v}$ equals
$\al$: 
\begin{equation}
\label{q-def}
q_v^{\al}(\et) = q_v^{\al}(\et; \be, h) := \frac{\exp\left(\be \et(h + \sum_{w \sim v}\al_w)\right)}
{\exp\left(\be \et(h + \sum_{w \sim v}\al_w)\right) + \exp\left(-\be \et(h + \sum_{w \sim v}\al_w)\right)}.
\end{equation}
Note that the dependence on the `neighbour configuration' $\al$ is only through the {\it number} of $+$ (and hence
of $-$) spins in $\al$.
Therefore it is also convenient to define, for $m = 0, \cdots, 4$,
\begin{equation} \label{qdef2}
q_v^{(m)}(\et) = q_v^{\al}(\et),
\end{equation}
where $\al$ may be any element of $\{-1, +1\}^{\partial v}$ with the property that the number of $w \sim v$ with
$\al_w = +1$ equals $m$.


The following result is well-known and goes back to \cite{AiBaFe} and \cite{Le}.
Higuchi (\cite{Hi1}
proved and used a stronger result but the weaker version below is sufficient for our purpose.

\begin{thm} \label{mixing}
There exist $C_1 > 0$ and $\ep > 0$ (which depend on $\be$ but not on $h$) such that

\begin{equation}
\label{mix-prop}
\mu_{\be,h}(\si_0 = +1 \, | \, \si_{\partial \La(n)} \equiv +1) - 
\mu_{\be,h}(\si_0 = +1 \, | \, \si_{\partial \La(n)} \equiv -1)  
\leq C_1 \, \exp(-\ep n).
\end{equation}
Here $\La(n)$ denotes the set of vertices $[-n,n]^2$ and $\partial \La(n)$ the boundary of this set.
\end{thm}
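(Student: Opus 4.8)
The plan is to exploit that for $\beta<\beta_c$ there is a \emph{unique} infinite-volume Gibbs measure, and to combine this with a coupling/monotonicity argument. First I would recall the two extremal finite-volume measures: let $\mu^{+}_{\La(n)}$ (respectively $\mu^{-}_{\La(n)}$) denote the Ising measure on $\La(n)$ with boundary condition $\si_{\partial\La(n)}\equiv +1$ (resp.\ $\equiv -1$). By the FKG inequality and monotonicity in boundary conditions, $\mu^{+}_{\La(n)}\succeq\mu^{-}_{\La(n)}$ stochastically, so the left-hand side of \eqref{mix-prop} equals $\mu^{+}_{\La(n)}(\si_0=+1)-\mu^{-}_{\La(n)}(\si_0=+1)\ge 0$, which is precisely the expression we must bound. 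The standard monotone coupling $\P$ of $\mu^{+}_{\La(n)}$ and $\mu^{-}_{\La(n)}$ on the same probability space, with $\si^{+}\ge\si^{-}$ everywhere, turns this into $\P(\si^{+}_0\neq\si^{-}_0)=\P(\si^{+}_0=+1,\si^{-}_0=-1)$.

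Next I would invoke the known fact that uniqueness of the Gibbs measure for $\beta<\beta_c$ comes with \emph{exponential decay of correlations} — this is exactly the content of the references \cite{AiBaFe} and \cite{Le} cited just before the theorem. Concretely, one has an estimate of the form $\mu^{+}_{\La(n)}(\si_0=+1)-\mu^{-}_{\La(n)}(\si_0=+1)\le C_1\exp(-\ep n)$ for constants $C_1,\ep>0$ depending only on $\beta$ (and, importantly, \emph{not} on $h$, since the external field only helps: adding a field of either sign strengthens uniqueness and does not spoil the exponential rate, uniformly on $\R$ — a point one should note explicitly, perhaps by a monotonicity-in-$h$ comparison or by observing that the Dobrushin-type uniqueness region is $h$-independent for $\beta<\beta_c$). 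An alternative, more self-contained route is a disagreement-percolation argument: couple $\si^{+}$ and $\si^{-}$ step by step (e.g.\ via a Markov chain / heat-bath dynamics or via the Glauber update), show that the set of disagreement sites is dominated by a subcritical percolation cluster reaching from $\partial\La(n)$ to $0$, and conclude $\P(0\lra\partial\La(n)\text{ in disagreement set})\le C_1 e^{-\ep n}$ by the exponential decay of that dominating percolation process.

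The main obstacle I expect is the \emph{uniformity in $h$} of the constants $C_1$ and $\ep$. The cleanest way around it: fix $\beta<\beta_c$ and note that the single-site conditional probabilities $q_v^{(m)}(\cdot;\beta,h)$ in \eqref{q-def}–\eqref{qdef2} have influence (in the Dobrushin sense) $\max_m |q_v^{(m+1)}(+1)-q_v^{(m)}(+1)|$ that is \emph{maximized at $h=0$} and decreases as $|h|\to\infty$; hence the Dobrushin contraction coefficient is bounded by its $h=0$ value, which is strictly below $1$ precisely in a range of $\beta$ that (after the standard improvement from single-site to block updates) covers all of $(0,\beta_c)$. This gives the exponential bound with $h$-independent constants. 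Since Theorem \ref{mixing} is invoked only as a black box (and a weaker one than Higuchi's), I would simply cite \cite{AiBaFe}, \cite{Le} for the statement and include the short remark above to justify that the field dependence is harmless, rather than reproduce the full disagreement-percolation estimate here.
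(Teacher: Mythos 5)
Your proposal ends up in the same place as the paper: the paper gives \emph{no} proof of Theorem \ref{mixing} at all, but simply records it as a well-known consequence of \cite{AiBaFe} and \cite{Le} (noting that Higuchi proved and used a stronger statement), which is exactly what you decide to do. Your framing via the monotone coupling of the $+$ and $-$ boundary-condition measures is the standard and correct way to read the left-hand side of \eqref{mix-prop}, and deferring the hard analytic content to the cited references is consistent with the paper.

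One caveat on the supplementary remark you offer for the uniformity in $h$: the claim that the single-site Dobrushin influence coefficients are maximized at $h=0$ is not quite right (the difference $q^{(m+1)}(+1)-q^{(m)}(+1)$ is largest when the sigmoid's argument interval is centered at $0$, which for suitable $m$ happens at $h=\pm1$ rather than $h=0$), and in any case the single-site Dobrushin condition, even uniformized over $h$, does not cover all of $(0,\beta_c)$; upgrading to block conditions valid up to $\beta_c$ is itself a deep result, not a ``standard improvement.'' The mechanism actually supplied by the cited references is different: \cite{AiBaFe} gives sharpness (exponential decay of truncated correlations) at $h=0$ for all $\beta<\beta_c$, and the GHS inequality of \cite{Le} is what transfers this to all $h$, since it implies the relevant truncated correlations are maximized at zero field. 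Since your remark is explicitly non-load-bearing (you cite the references for the statement), this does not break the argument, but if you keep the remark you should route it through GHS rather than Dobrushin.
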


Martinelli and Olivieri (Theorem 3.1 in \cite{MaOl}) have proved for a large class of spin systems that such
a {\it spatial} mixing property implies exponential convergence (to equlibrium) for certain dynamics. For the
Ising model this dynamics is as follows. First we define the notion {\it local update}.
Let $\al \in \{-1, +1\}^{\Z^2}$ and $v \in \Z^2$. By a local update of the spin value of $v$ (in the configuration $\al$)
we mean that we draw a new value, say $\et$,  according to the distribution $q_v^{\al_{\partial v}}(\cdot)$,
and leave $\al$
unchanged everywhere except at $v$ where we replace $\al_v$ by $\et$. The dynamics can now be described as follows:
Start from some initial configuration. Each vertex is {\it activated} at rate $1$.
When a vertex is activated, a local update at that vertex is made.
The Martinelli-Olivieri result (for the special case of the Ising model) says that the distribution at time $t$, starting
from any initial configuration, converges exponentially fast (uniformly in $h$) to $\mu_{\be,h}$.

As observed in
(\cite{BeSt}), this also holds for certain discrete-time versions of the dynamics. 
The discrete-time
dynamics in (\cite{BeSt}) involves auxiliary random
variables in terms of which the dynamics is not monotone.
For the purpose in (\cite{BeSt}) that did not matter, but 
this dynamics is not suitable for our current purpose:
to `construct' the Ising measure in such a way that it
fits with Definition \ref{maindef}.
The following dynamics {\it is} suitable for our purpose, and the Martinelli-Olivieri proof (with straightforward
modifications) works for this dynamics as well: In this discrete-time dynamics we update all even vertices at the
even times and all odd vertices at the odd times. (A vertex is even (odd) if the sum of its coordinates is even (odd)).
Note that these `parallel' updates are well-defined, since the update of an even (odd) vertex only involves the `current'
spin values of its neighbours, each of which is odd (even). 

To describe the Ising model 
as a nice, finitary representation in the sense of Definition \ref{maindef},
we describe these local updates as follows in terms of i.i.d. random variables $Y_i(t), i \in \Z^2,
t \in \N$, which take values in $\{0, 1, \cdots, 4\}$. Here (and further) $\si_v^{\om}(t)$ denotes
the spin value at vertex $v$ at time $t$,
for the system starting at time $0$ with configuration $\om$. Sometimes we will omit the superscript
$\om$. At each even time $t$ we do the following,
for each even vertex $v$: If the number of $w \sim v$ with $\si_t(w) = -1$ is at most $Y_v(t)$ we set $\si_v(t+1) := +1$,
otherwise we set $\si_v(t+1) := -1$. For odd $t$ we do the analogous actions for all odd $v$. 
It is easy to see (recall \eqref{qdef2}) that if we take the following
distribution for the $Y$ variables, these actions correspond
exactly with the earlier defined notion of local updates:

$$P(Y_v(t) \geq m) = q^{(4-m)}_0(+1; \be,h), \,\, 0 \leq m \leq 4.$$

An advantage of such auxiliary variables is that it enables to couple systems starting from different initial
configurations. Define $\si^{\om}(t) = (\si_v^{\om}(t), \, v \in \Z^2)$ as the configuration at
time $t$ for the system that
starts at time $0$ with configuration $\om$ and follows the above mentioned dynamics (involving the $Y$ variables).
We will simply
replace the superscript $\om$ by $+$ when we start with the initial configuration where each vertex has
value $+1$, and by $-$ when we start with $-$ values.
As said before, the Martinelli-Olivieri result extends to this dynamics.

In terms of the above notation, the Martinelli-Olivieri result tells us that
there are positive $C_2$ and $\la_2$ (which depend on $\be$ but not on $h$) such that for all $t$

\begin{equation}
\label{mo}
P(\si_v^+(t) \neq \si_v^-(t)) \leq C_2 \exp(-\la_2 t).
\end{equation}

Also note that we can extend the collection of $Y$ variables to negative $t$, and that for
all integers $s, t$ with $s \leq t$, and all configurations $\om \in \{-1,+1\}^{\Z^2}$, we can define
$\si^{\om}(s,t) = (\si_v^{\om}(s,t), \, v \in \Z^2)$ as the configuarion at time $t$ for the system that
starts at time $s$ with configuration $\om$ and evolves as described above.
Analogously as in \cite{BeSt}  (which was partly inspired by the perfect-simulation ideas in \cite{PrWi}),
we observe that if $t < 0$ and $\si_v^{+}(t, 0) = \si_v^-(t,0)$, then (by obvious monotonicity)
$\si_v^{\om}(s,0) = \si_v^{\om'}(s,0)$ for all $s \leq t$ and all $\om, \om'$.
From this observation, \eqref{mo} and standard arguments it follows that if we define

$$\tau(v) = \max\{t < 0 \, : \, \si_v^{+}(t, 0) = \si_v^-(t,0)\}, \,\, v \in \Z^2,$$

and

\begin{equation}
\label{sig-def}
\si(v) = \si_v^+(\tau(v),0) \,\,(= \si_v^-(\tau(v),0)), \,\, v \in \Z^2,
\end{equation}
we have that $\si := (\si(v), v \in \Z^2)$ has the Ising distribution $\mu_{\be,h}$, and that

\begin{equation}
\label{tau-exp}
P(\tau(v) \geq n) \leq  C_2 \exp(-\la_2 n).
\end{equation}

This shows that the Ising distribution
has indeed a nice, finitary representation (in the sense of Definition \ref{maindef}):
Take $I = \{(v,t) \, : \, v \in \Z^2, t \in \Z, t < 0\}$, and 
$X_{(v,t)} = Y_v(t), \, (v,t) \in I$. Then (i) is clear. To see (ii), note that, for each $t < 0$,  $\si_v^+(t,0)$ and
$\si_v^-(t,0)$ are completely determined by the variables $Y_w(s), \, t \leq s < 0, \, \parallel w-v \parallel < s$. \\
So for the sequence $i_1(v), i_2(v), \cdots$ we can take $(v, -1)$, followed by an enumeration
of the (finite) set $\{(w,-2)\, : \, w \in \Z^2, \prl w-v\prl < 2\}$, followed by an enumeration of
$\{(w,-3)\, : \, w \in \Z^2, \prl w-v \prl < 3\}$ etc. 
The upper bound in (ii) (in fact, even a stronger bound) for
the probability that $X_{i_1(v)}, \cdots X_{i_m(v)}$ does not determine $\si_v$
follows from \eqref{tau-exp} and the fact that the set
$\{(w,s) \, : \, \prl w-v \prl  < |s|, \, t \leq  s < 0\}$
has of order $|t|^3$ elements.
Property (iii) is now also clear. Property (iv) is standard (and has nothing to do with the above description
of the Ising model in terms of the $Y$ variables: Since $\beta < \beta_c$, there is a unique Ising measure
with parameters $\beta, h$ and this measure inherits the symmetry properties in the definition of the model). 

Hence we may apply Theorem \ref{mainthm}.
Moreover, the spin-flip symmetry mentioned in
the remark around \eqref{mat-rel} is clearly satisfied.
So we get the following, which is 
the earlier mentioned result by Higuchi (see Theorem 1 (and Corollary 2) in \cite{Hi2}).

\begin{thm} \label{Hi-thm}
Let $\be < \be_c$ and consider the Ising measures $\mu_{\beta,h}, \, h \in \R$ on the square lattice.
Statements (a) and (b) of Theorem
\ref{mainthm} above (with $\Puh  = \mu_{\be, h}$),
as well as equation \eqref{mat-rel}, hold for this model.
\end{thm}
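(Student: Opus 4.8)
The plan is to assemble the ingredients developed above into a short proof: it suffices to verify that, for every fixed $\be < \be_c$, the unique Ising measure $\mu_{\be,h}$ admits a nice finitary representation in the sense of Definition \ref{maindef}, with $h$ playing the role of the parameter, and that reversing the sign of $h$ amounts to a global spin flip. Given this, statements (a) and (b) follow immediately from Theorem \ref{mainthm} applied with $\Puh = \mu_{\be,h}$, and equation \eqref{mat-rel} follows from the Remark after Theorem \ref{mainthm}. The only input beyond Theorem \ref{mainthm} is the Martinelli--Olivieri exponential-relaxation estimate (\cite{MaOl}), whose spatial-mixing hypothesis for the present model is supplied by Theorem \ref{mixing}.

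For the representation I would use the discrete-time parallel heat-bath dynamics described above, driven by i.i.d.\ variables $Y_v(t) \in \{0,\dots,4\}$ ($v\in\Z^2$, $t\in\Z$) with $P(Y_v(t)\ge m) = q^{(4-m)}_0(+1;\be,h)$, updating even vertices at even times and odd vertices at odd times, so that the update of $v$ at time $t$ uses only $Y_v(t)$ and the time-$t$ spins of the neighbours of $v$. Running the chain from the far past with the two extremal initial conditions and using a coupling-from-the-past argument (as in \cite{BeSt}, following \cite{PrWi}), I would set $\tau(v) = \max\{t<0:\si_v^+(t,0)=\si_v^-(t,0)\}$ and $\si(v)=\si_v^+(\tau(v),0)$ as in \eqref{sig-def}. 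Monotonicity of the dynamics in both the initial configuration and the driving variables guarantees that $\tau(v)$ is a.s.\ finite, that $\si(v)$ is independent of the configuration chosen at time $\tau(v)$, and --- by the usual CFTP identity --- that $\si=(\si(v))_v$ has law $\mu_{\be,h}$; combining \eqref{mo} with standard arguments yields the exponential tail \eqref{tau-exp} for $\tau(v)$, uniformly in $h$.

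It then remains to check properties (i)--(iv). Take $I=\{(v,t):v\in\Z^2,\,t<0\}$ and $X_{(v,t)}=Y_v(t)$. Property (i): each $\si_v^{\om}(t,0)$ is an increasing $\{-1,+1\}$-valued function of the $Y$'s (and of $\om$), and $\si_v(\b0)=-1$, $\si_v(\bk)=+1$ (here $k=4$). Property (ii): $\si_v^{\pm}(t,0)$ is determined by the variables $Y_w(s)$ with $t\le s<0$ and $\prl w-v\prl<|s|$, a set of cardinality of order $|t|^3$; enumerating $i_1(v),i_2(v),\dots$ cone-slice by cone-slice (first $(v,-1)$, then the slice at $s=-2$, then at $s=-3$, etc.), the probability that the first $m$ of these fail to determine $\si_v$ is, by \eqref{tau-exp}, of order $\exp(-c\,m^{1/3})$ for a constant $c>0$, hence $\le C_0 m^{-(2+\ga)}$ for suitable $C_0>0,\ga>0$. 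Property (iii): by the same cone description, the indices used by $v$ and by $w$ up to depth $m$ lie in space-time cones of spatial radius $<m$ around $v$ and around $w$, which are disjoint once $m<\tfrac12\prl v-w\prl$, so one may take $\al=\tfrac12$. Property (iv): for $\be<\be_c$ there is a unique Gibbs measure for \eqref{Is-def}, and uniqueness forces it to inherit the translation-, $90^\circ$ rotation- and axis-reflection-invariance of the interaction. Finally, \eqref{Is-def} shows that the law of $\si$ under $\mu_{\be,h}$ equals the law of $-\si$ under $\mu_{\be,-h}$, which is the spin-flip hypothesis; hence \eqref{mat-rel} holds with $h^*_c=\inf\{h:\mu_{\be,h}(|C^{+\,*}|=\infty)>0\}$, and the theorem follows.

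The step I expect to require the most care is the one flagged in the text: verifying that the Martinelli--Olivieri relaxation bound survives the passage from the continuous-time single-site dynamics to the parallel even/odd discrete-time dynamics --- that is, establishing \eqref{mo} in this setting --- since the ``straightforward modifications'' of \cite{MaOl} must be carried out so that the light-cone radius of the parallel scheme (and hence the exponent in (ii) and the constant $\al$ in (iii)) is correctly tracked. Beyond \eqref{mo} and \eqref{tau-exp}, the percolation content --- statements (a), (b) --- is obtained wholesale from Theorem \ref{mainthm}, whose proof (the RSW-type box-crossing theorem and the sharp-threshold argument) is where the real difficulty of the paper lies.
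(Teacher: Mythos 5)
Your proposal is correct and follows essentially the same route as the paper: the parallel even/odd discrete-time heat-bath dynamics driven by the i.i.d.\ variables $Y_v(t)$, coupling from the past with the two extremal initial conditions, the Martinelli--Olivieri bound \eqref{mo} yielding \eqref{tau-exp}, the cone-slice enumeration giving properties (i)--(iv) with the $|t|^3$ count behind (ii), and the spin-flip symmetry from \eqref{Is-def} giving \eqref{mat-rel}. Your verification of (ii) and (iii) is in fact slightly more explicit than the paper's, which simply asserts these points.
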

{\bf Remarks:} 
(i) The sharp-threshold result in \cite{GrGr} may provide yet another route to prove this result for
the Ising model.
However, that sharp-threshold result is not suitable for the proof of our general Theorem \ref{mainthm}, because
the random field $\si_i, i \in \Z^2$ in Theorem \ref{mainthm} does not necessarily satisfy the strong
FKG condition needed in \cite{GrGr}. \\
(ii) We hope that, like the Ising model, many other models which at first sight are not covered by
Theorem \ref{mainthm}, can be constructed or represented in such a way that this Theorem does apply.
However, we do not claim that this Theorem gives a completely general recipee. For instance, attempts
to bring the models treated
in \cite{BaCaMe} (which have some of the flavour of the Ising model) in the context of this theorem have, so
far, not been successful.

\end{subsubsection}
\end{subsection}
\end{section}
\begin{section}{Preliminaries}
\begin{subsection}{Approximate zero-one laws} \label{sect-azol}
A key ingredient in our proof of Theorem \ref{mainthm} is a sharp threshold result (or approximate zero-one 
law). As we said in Section 1, there are several of such results in the literature. 
The one we use is Corollary 1.2 in Talagrand's paper \cite{Ta}, which
is somewhat similar in spirit to Russo's approximate
zero-one law (\cite{Ru82}), but more (quantitatively) explicit.


These threshold results are, although particularly useful for percolation, of a much more
general nature.
Consider the set $\Om := \{0,1\}^n$, which for the moment serves as our sample space. 
For $\om$, $\om' \in \Om$ we say that $\om \leq \om'$ (or, equivalently, $\om' \geq \om$)
if $\om_i \leq \om'_i$ for all $1 \leq i \leq n$.
Following the standard terminology we say that an event (subzet of $\Om$) is increasing if
for each $\om \in A$ and each $\om' \geq \om$ we have $\om' \in A$.
For $\om \in \Om$ and $1 \leq i \leq n$ we define $\om^{(i)}$ as the configuration obtained
from $\om$ by flipping $\om_i$. More precisely, $\om^{(i)}_j$ is equal to $\om_j$ for 
$j \neq i$, and $1 - \om_j$ if $j = i$.

Let $A$ be an increasing event, $\om \in \Om$, and let $1 \leq i \leq n$.
We say that $i$ is an internal
pivotal index (for $A$, in the configuration $\om$) if $\om \in A$ but $\om^{(i)} \not \in A$.
It is easy to see from the fact that $A$ is increasing that this implies that $\om_i = 1$.

By $A_i$ we denote the event that $i$ is an internal pivotal for $A$; that is,

$$A_i = \{\om \, : \, \om \in A \mbox{ but } \om^{(i)} \not \in A\}.$$

Let, for $p \in (0,1)$,  $\P_p$ be the product measure with parameter $p$.
Talagrand's result to which we referred above is the following:

\begin{thm}\label{tal1.2} ({\em Talagrand (\cite{Ta}, Corollary 1.2)})
There is a universal constant $K_1$ such that for all $n$, all increasing events 
$A \subset \{0,1\}^n$ and all $p$,

\begin{equation} \label{t1.2-eq}
\frac{d}{d p} \, \P_p(A) \geq \frac{\log(1/\ep)}{K_1} \P_p(A) (1 - \P_p(A)),
\end{equation}
where $\ep = \ep(p) = \sup_{i \leq n} \P_p(A_i)$.
\end{thm}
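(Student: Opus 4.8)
\medskip\noindent\textbf{Proof proposal.}
The plan is to convert the differential inequality into a statement purely about influences, and then to prove that statement by $p$-biased Fourier analysis together with hypercontractivity.

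The first step is the Margulis--Russo formula. Put $f=\mathbf 1_A$, and for each $i$ let $I_i=I_i(p)$ denote the probability that coordinate $i$ is pivotal for $A$, that is, that $\mathbf 1_A$ takes the value $1$ when $\om_i$ is set to $1$ and the value $0$ when $\om_i$ is set to $0$ (an event not involving $\om_i$ itself). Differentiating $\P_p(A)=\sum_\om\P_p(\om)\mathbf 1_A(\om)$ in $p$ gives
\[
\frac{d}{dp}\,\P_p(A)=\sum_{i=1}^{n}\big(\P_p(A\mid\om_i=1)-\P_p(A\mid\om_i=0)\big)=\sum_{i=1}^{n}I_i .
\]
Since $A$ is increasing, the event $A_i$ is exactly ``$\om_i=1$ and $i$ is pivotal'', so $\P_p(A_i)=p\,I_i$, hence $\ep=\sup_i\P_p(A_i)=p\max_iI_i$; also $\P_p(A)(1-\P_p(A))=\mathrm{Var}_p(f)$. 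Thus \eqref{t1.2-eq} is equivalent to the influence inequality
\[
\sum_{i=1}^{n}I_i\;\ge\;\frac{\log\!\big(1/(p\max_iI_i)\big)}{K_1}\,\mathrm{Var}_p(f),
\]
and it is enough to prove an inequality of this form with some absolute constant; the interesting case is when $\max_iI_i$ and/or $p$ are small, since otherwise the right-hand side is $O(\mathrm{Var}_p(f))$, which is immediate.

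This inequality is of Kahn--Kalai--Linial / Talagrand type, and I would prove it from the $p$-biased Fourier--Walsh expansion $f=\sum_S\hat f(S)\chi_S$, for which $\mathrm{Var}_p(f)=\sum_{S\ne\emptyset}\hat f(S)^2$. Introduce the discrete derivative $\nabla_i f(\om)=f(\om^{i\to1})-f(\om^{i\to0})$; it is a $\{-1,0,1\}$-valued function of the coordinates other than $i$, with $\mathbb E_p|\nabla_i f|=\mathbb E_p(\nabla_i f)^2=I_i$, and its Fourier support corresponds to the sets containing $i$, with the normalisation $\sum_S|S|\hat f(S)^2=p(1-p)\sum_iI_i$. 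Now fix a degree cutoff $t$ and split the variance. The part of degree $>t$ is bounded by $\tfrac1t\sum_S|S|\hat f(S)^2\le\tfrac1t\sum_iI_i$. For the part of degree in $\{1,\dots,t\}$, write $\sum_{0<|S|\le t}\hat f(S)^2\le\sum_i\|(\nabla_i f)^{\le t}\|_2^2$ (the factor $p(1-p)\le1$ only helps) and apply the $p$-biased hypercontractive inequality $\|T_\rho g\|_2\le\|g\|_{1+\rho^2}$ for a suitable noise parameter $\rho=\rho(p)$: since $T_\rho$ multiplies the degree-$k$ part by $\rho^k$ and $\|\nabla_i f\|_{1+\rho^2}=I_i^{1/(1+\rho^2)}$ (again because $\nabla_i f$ takes only the values $0,\pm1$), this gives $\|(\nabla_i f)^{\le t}\|_2^2\le\rho^{-2t}I_i^{2/(1+\rho^2)}\le\rho^{-2t}(\max_jI_j)^{\eta}I_i$ for $\eta=\eta(\rho)\in(0,1)$. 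Summing, $\mathrm{Var}_p(f)\le\big(\tfrac1t+\rho^{-2t}(\max_jI_j)^{\eta}\big)\sum_iI_i$, and optimising over $t$ (roughly $t\asymp\eta\log(1/\max_jI_j)/\log(1/\rho)$) yields $\mathrm{Var}_p(f)\le c(p)\,(\log(1/\max_iI_i))^{-1}\sum_iI_i$. A second, elementary ingredient is the Poincar\'e bound $\sum_iI_i\ge\mathrm{Var}_p(f)/(p(1-p))$, obtained by conditioning on all coordinates but one; it captures part of the $p$-dependence and covers the trivial cases.

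The main obstacle is $p$-uniformity: upgrading the above to the stated inequality, i.e.\ getting $\log\!\big(1/(p\max_iI_i)\big)$ in place of $\log(1/\max_iI_i)$ with an \emph{absolute} constant. For $p=\tfrac12$ the argument above is precisely the KKL/Talagrand proof, and the classical Bonami--Beckner inequality (e.g.\ with $\rho^2=\tfrac13$) yields a universal constant directly, with $\log(1/p)$ being $O(1)$. For general $p$ one must use the sharp form of $p$-biased hypercontractivity (the two-point inequality) with the correct dependence of $\rho$ on $p$, and track carefully how the cutoff $t$, the exponent $\eta$, and the final constant depend on $p$, so that the extra $\log(1/p)$ is recovered at full strength. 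The delicate regime is the intermediate one, where $\max_iI_i$ is, say, super-exponentially small in $1/p$: there the Poincar\'e bound alone is too weak and a crude biased analogue of the KKL estimate loses a spurious $\log\log(1/p)$. Removing this loss is exactly the technical content of Talagrand's proof in \cite{Ta}, which I would follow.
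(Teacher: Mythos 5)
The paper does not actually prove this statement: it is imported from Talagrand \cite{Ta}, and the only argument supplied is the Remark that follows it, namely that Talagrand's Corollary 1.2 holds in the sharper form with $K_1$ replaced by $Kp(1-p)\log[2/(p(1-p))]$, and since $p(1-p)\log[2/(p(1-p))]$ is bounded above by a universal constant, the form stated here follows at once. Your proposal instead attempts to reprove Talagrand's result from first principles. The reductions you carry out are correct: Margulis--Russo gives $\frac{d}{dp}\P_p(A)=\sum_i I_i$; for increasing $A$ the event $A_i$ is the intersection of $\{\om_i=1\}$ with the ($\om_i$-independent) event that $i$ is pivotal, so $\P_p(A_i)=pI_i$ and $\ep=p\max_i I_i$; the Efron--Stein/Poincar\'e bound $\sum_i I_i\ge \mathrm{Var}_p(f)/(p(1-p))$ disposes of the regime where $\log(1/\ep)$ is dominated by $\log(1/p)$ (since $\log(1/p)\le (1-p)/p\le 1/(p(1-p))$); and the degree-cutoff-plus-hypercontractivity computation is the standard biased KKL argument, with each individual step ($\sum_S|S|\hat f(S)^2=p(1-p)\sum_iI_i$, $\|\nabla_i f\|_{1+\rho^2}=I_i^{1/(1+\rho^2)}$, etc.) valid as stated.

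The gap is that the argument does not close in exactly the regime that matters. When $\max_i I_i\le p$ one must show $\sum_i I_i\ge c\,\log(1/\max_i I_i)\,\mathrm{Var}_p(f)$ with an \emph{absolute} $c$, but $p$-biased hypercontractivity forces $\rho^2\asymp 1/\log(1/p)$, so your optimization over $t$ yields only $c\,\log(1/\max_i I_i)/\log\log(1/p)$; combining with Poincar\'e does not repair this when $\max_i I_i$ is super-exponentially small in $1/p$. You diagnose this correctly and then state that removing the loss ``is exactly the technical content of Talagrand's proof, which I would follow'' --- but that loss \emph{is} the theorem, so as a standalone proof the proposal is circular at its crux. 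Two honest ways to finish: either do what the paper does and simply quote \cite{Ta}, deriving the stated form from the sharper one via the boundedness of $p(1-p)\log[2/(p(1-p))]$; or genuinely reproduce Talagrand's Theorem 1.1, in which the hypercontractivity cost is absorbed into the prefactor $p(1-p)\log(2/(p(1-p)))$ rather than into the constant (his bound involves $\|\Delta_i f\|_2^2/\log(e\|\Delta_i f\|_2/\|\Delta_i f\|_1)$ rather than $\max_i I_i$), and then carry out the case analysis converting it into Corollary 1.2. As written, the proposal is a correct road map with the destination missing.
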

{\bf Remark:} In fact, Corollary 1.2 in \cite{Ta} is somewhat sharper, namely with $K_1$ above
replaced by
$K p (1-p) \log[(2 / (p (1-p))]$, where $K$ is also a universal constant.
Since $p(1-p) \log[(2 / (p (1-p))]$ is bounded from above, Theorem \ref{tal1.2} above follows
immediately.

\smallskip\noindent
Let $p_1 < p_2$.
Noting, (as in Section 3 of \cite{Ta}) that \eqref{t1.2-eq} is equivalent to
$$\frac{d}{d p} \log \left(\frac{\P_p(A)}{1 - \P_p(A)} \right) \geq \frac{\log(1/\ep)}{K_1},$$
and integrating 
this inequality over the interval $(p_1, p_2)$, gives

\begin{cor} \label{tal1.3} (Talagrand (\cite{Ta}, Corollary 1.3))
There is a universal constant $K_1$ such that for all $n$, all increasing events  
$A \subset \{0,1\}^n$ and all $p_1 < p_2$,

\begin{equation}
\P_{p_1}(A) (1 - \P_{p_2}(A)) \leq (\ep')^{(p_2 - p_1)/K_1},
\end{equation}
where
\begin{equation} \label{eppdef}
\ep' = \sup_{p_1 \leq p \leq p_2} \max_{1 \leq i \leq n} \P_p(A_i).
\end{equation}
\end{cor}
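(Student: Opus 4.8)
The statement to be proved is Corollary~\ref{tal1.3}, which is presented as a direct consequence of Theorem~\ref{tal1.2}. The plan is to follow the hint given in the paragraph immediately preceding the corollary: rewrite the differential inequality \eqref{t1.2-eq} as a lower bound on the derivative of a logit-type quantity, and then integrate over the parameter interval.

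\medskip\noindent
\textbf{Step 1: Rewrite \eqref{t1.2-eq} in logit form.}
Fix an increasing event $A \subset \{0,1\}^n$. For $p$ with $0 < \P_p(A) < 1$ (the boundary cases where $\P_p(A) \in \{0,1\}$ are trivial and can be dealt with separately, or absorbed by a limiting argument since $\P_p(A)$ is a polynomial in $p$), consider
$$g(p) := \log\!\left(\frac{\P_p(A)}{1 - \P_p(A)}\right).$$
By the chain rule, $g'(p) = \frac{1}{\P_p(A)(1-\P_p(A))} \cdot \frac{d}{dp}\P_p(A)$, so \eqref{t1.2-eq} is exactly the assertion that $g'(p) \geq \log(1/\ep(p))/K_1$, where $\ep(p) = \sup_{i \leq n}\P_p(A_i)$.

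\medskip\noindent
\textbf{Step 2: Integrate over $(p_1,p_2)$.}
Since $\ep' = \sup_{p_1 \leq p \leq p_2}\ep(p) \geq \ep(p)$ for every $p$ in the interval, and since $\log(1/\cdot)$ is decreasing, we get $g'(p) \geq \log(1/\ep')/K_1$ for all $p \in (p_1,p_2)$. Integrating from $p_1$ to $p_2$ yields
$$g(p_2) - g(p_1) \geq \frac{(p_2 - p_1)\log(1/\ep')}{K_1}.$$
Exponentiating and rearranging, $\frac{\P_{p_2}(A)}{1-\P_{p_2}(A)} \Big/ \frac{\P_{p_1}(A)}{1-\P_{p_1}(A)} \geq (1/\ep')^{(p_2-p_1)/K_1}$, i.e.
$$\P_{p_1}(A)\,(1 - \P_{p_2}(A)) \leq (\ep')^{(p_2-p_1)/K_1}\, \P_{p_2}(A)\,(1 - \P_{p_1}(A)).$$
Since $\P_{p_2}(A) \leq 1$ and $1 - \P_{p_1}(A) \leq 1$, the right-hand side is at most $(\ep')^{(p_2-p_1)/K_1}$, which is the claimed inequality.

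\medskip\noindent
\textbf{Anticipated obstacles.}
The argument is essentially bookkeeping, so there is no deep obstacle. The only points requiring a little care are: (i) handling the degenerate cases $\P_p(A) \in \{0,1\}$, where $g$ is not defined — here one uses that $\P_p(A)$ is a polynomial, hence if it is not identically $0$ or $1$ it takes values in $(0,1)$ for all but finitely many $p$, and one can pass to the limit (alternatively, if $\P_{p_1}(A) = 0$ or $\P_{p_2}(A) = 1$ the inequality is trivially true); and (ii) the tacit assumption that $\ep' < 1$ so that $\log(1/\ep') > 0$ and the monotonicity of $g$ goes the right way — if $\ep' \geq 1$ the inequality $(\ep')^{(p_2-p_1)/K_1} \geq 1$ is vacuous since the left-hand side is a probability product bounded by $1$. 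One should also note that $g'$ is integrable on $[p_1,p_2]$ (again because $\P_p(A)$ is a polynomial, $g$ is real-analytic wherever it is finite), so the fundamental theorem of calculus applies without fuss.
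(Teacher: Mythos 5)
Your proof is correct and follows exactly the route the paper itself indicates: rewriting \eqref{t1.2-eq} as a lower bound on $\frac{d}{dp}\log\bigl(\P_p(A)/(1-\P_p(A))\bigr)$, bounding $\ep(p)$ by $\ep'$, and integrating over $(p_1,p_2)$. The extra care you take with the degenerate cases $\P_p(A)\in\{0,1\}$ and $\ep'\geq 1$ is sound but not needed beyond what the paper tacitly assumes.
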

{\bf Remark:} In the definition of $\ep'$ in the statement of Corollary 1.3 in \cite{Ta} the supremum involving
$p$ is over the interval $[0,1]$ instead of $[p_1, p_2]$, but it is clear that
the result with $\ep'$ as defined in \eqref{eppdef} holds.

\smallskip
We want to apply the above results to the family of distributions $\Puh, h \in \R$ in the statement
of Theorem \ref{mainthm}. Recall that $\Puh$ is the product over $I$ of the distribution of $\muh$, and that
the latter is a probability distribution on $\{0, \cdots, k\}$. First we have to `generalize' some of our
definitions.

The notion of increasing event is extended in the obvious way.
The extension of the notion of being {\it pivotal} is somewhat less obvious.
Let $A \subset \{0,1, \cdots, k\}^I$ be
an increasing event.
We say that index $i \in I$ is an internal pivotal
index (in a configuration $\om \in \{0,\cdots,k\}^I$ and for a given increasing event $A$) if
$\om \in A$ but $\om^{(i)} \not \in A$, where now $\om^{(i)}$ is defined as the configuration $\om'$ which has 
$\om'_j = \om_j$ for all $j \neq i$ and $\om'_i = 0$. (It follows immediately from the definition that if
$i$ is pivotal, then $\om_i > 0$).

We can not immediately use Corollary \ref{tal1.3}
because of the following two issues: One is that $k$ may be larger than $1$, the other is that $I$ is not finite but
countably infinite. In order to (partly) handle the latter, we first define
\begin{defn} \label{def-appr} 
(a) An increasing event $A \subset \{0, \cdots,k\}^I$ which is  defined in terms of finitely many coordinates
is called an {\it increasing cylinder event}. \\
(b) We say that an increasing event $A \subset \{0, \cdots, k\}^I$ is {\em strongly approximable} (w.r.t. the
probability measures $\Puh, h \in \R$) if there
is a sequence of  increasing cylinder events $A(n), n= 1, 2, \cdots$, such that 

$$\Puh(A(n) \triangle A) \rightarrow 0, \,\,\mbox{ as } n \rightarrow \infty,$$
for every $h \in \R$. In that case we say that $A$ is strongly approximated by the sequence $(A(n))$.
\end{defn}
{\bf Remark}: Clearly, if there are increasing cylinder events $A(n)$ 
such that $A(n) \rightarrow A$ in the set-theoretical sense, then $A$ is
strongly approximable.

We will use the following extension of Corollary \ref{tal1.3}.

\begin{cor}
\label{tal1.3xx}
If the event $A \subset \{0, 1, \cdots,k\}^I$ is increasing
and strongly approximable, then, for all $-\infty < h_1 < h_2 < \infty$,

\begin{equation}
\label{eq-tal1.3xx}
P^{(h_1)}(A) (1 - P^{(h_2)}(A)) \leq \bar\ep^{\frac{(h_2 - h_1) c(h_1, h_2)}{K2}},
\end{equation}
where $K_2$ is a constant,  $\bar\ep = \sup_i \sup_{h \in (h_1, h_2)} \Puh(A_i)$ and
$$c(h_1, h_2) = \inf_{h \in [h_1, h_2]} \min_{1 \leq j \leq k} \frac{d}{d h} \muh(\{j, \cdots, k\}).$$
\end{cor}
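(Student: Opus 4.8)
The plan is to establish the inequality first for increasing \emph{cylinder} events and then to pass to the limit for general strongly approximable events. So I would first fix an increasing cylinder event $A$ depending only on the coordinates in a finite $V\subset\subset I$, and abbreviate $q_j(h):=\muh(\{j,\cdots,k\})$, with the conventions $q_0\equiv 1$, $q_{k+1}\equiv 0$. For such an $A$ the assertion is a finite--dimensional statement about the product measure $\bigotimes_{i\in V}\muh$ on $\{0,\cdots,k\}^V$, and I would deduce it from the Talagrand--type differential inequality
\[
\frac{d}{dh}\,\log\frac{\Puh(A)}{1-\Puh(A)}\ \ge\ \frac{c(h_1,h_2)}{K_2}\,\log\frac{1}{\bar\ep},\qquad h\in[h_1,h_2],
\]
by integrating over $(h_1,h_2)$, exponentiating, and using $\Puh(A)(1-\Puh(A))\le 1$ at the end --- exactly the passage from Theorem~\ref{tal1.2} to Corollary~\ref{tal1.3}. (We may assume $c(h_1,h_2)>0$, else there is nothing to prove.)

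To obtain the displayed inequality I would combine two ingredients. The first is a generalized Russo formula: after conditioning on $X_{-i}:=(X_{i'})_{i'\ne i}$, the increasing event $A$ depends on $X_i$ only through a threshold $\tau_i(X_{-i})\in\{0,1,\cdots,k+1\}$ ($\tau_i=0$ meaning $A$ holds irrespective of $X_i$, $\tau_i=k+1$ meaning $A$ fails irrespective of $X_i$), and a short Abel summation then gives
\[
\frac{d}{dh}\,\Puh(A)\ =\ \sum_{i\in V}\mathbb{E}_{\Puh}\!\big[\dot q_{\tau_i(X_{-i})}(h)\big]\ \ge\ c(h_1,h_2)\sum_{i\in V}\Puh(R_i),
\]
where $R_i:=\{1\le\tau_i(X_{-i})\le k\}$ is the event that changing $X_i$ alone can flip membership in $A$, the conventions $\dot q_0=\dot q_{k+1}=0$ are used, and the inequality is property~(a) (each $\dot q_j\ge c(h_1,h_2)$ on $[h_1,h_2]$). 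The second ingredient is a Talagrand--type influence/variance bound for a product of copies of a distribution on the finite ordered set $\{0,\cdots,k\}$ --- available from the general theorems of \cite{Ta}, which are stated for arbitrary product spaces, or obtainable by transporting the problem through the ``initial--run'' coupling $X_i=\max\{j\ge 0:B_{i,1}=\cdots=B_{i,j}=1\}$ with independent Bernoulli$(q_j(h)/q_{j-1}(h))$ variables $B_{i,j}$ and invoking Theorem~\ref{tal1.2} (extended to products of Bernoulli measures with differing parameters) --- and it yields $\sum_{i\in V}\Puh(R_i)\ge K^{-1}\log(1/\bar\ep')\,\Puh(A)(1-\Puh(A))$ with $\bar\ep':=\sup_i\sup_{h\in(h_1,h_2)}\Puh(R_i)$. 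Finally, since $A_i=\{X_i\ge\tau_i(X_{-i})\ge 1\}\subseteq R_i$ and $\Puh(X_i\ge\tau_i\mid X_{-i})=q_{\tau_i}(h)\ge q_k(h_1)>0$ on $R_i$, we get $\Puh(R_i)\le \Puh(A_i)/q_k(h_1)$, hence $\log(1/\bar\ep')\ge\log(1/\bar\ep)-\log(1/q_k(h_1))$; this additive loss is harmless --- it is absorbed into $K_2$, and for $\bar\ep$ not small the inequality is vacuous anyway --- and combining the pieces gives the displayed differential inequality.

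For a general strongly approximable increasing $A$, pick increasing cylinder events $A(n)$ with $\Puh(A(n)\triangle A)\to 0$ for every $h$. The cylinder case already gives $P^{(h_1)}(A(n))(1-P^{(h_2)}(A(n)))\le\bar\ep_n^{(h_2-h_1)c(h_1,h_2)/K_2}$, whose left side tends to $P^{(h_1)}(A)(1-P^{(h_2)}(A))$, so it suffices to know $\limsup_n\bar\ep_n\le\bar\ep$, where $\bar\ep_n=\sup_i\sup_{h\in(h_1,h_2)}\Puh((A(n))_i)$; one secures this by taking the $A(n)$ to be monotone truncations of $A$ to the coordinates examined so far, so that the threshold of $A(n)$ converges to that of $A$ and $(A(n))_i\to A_i$ in $\Puh$--measure. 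The main obstacle I expect is the second ingredient above: pinning down the precise $\{0,\cdots,k\}$--valued analogue of Talagrand's inequality with the correct constant $c(h_1,h_2)$, which forces one to track how per--coordinate influences transform under the reparametrization $h\mapsto\muh$ --- exactly what property~(a) is there to control --- and, if one routes through the Bernoulli coupling, to cope with the fact that the induced parameters $q_j(h)/q_{j-1}(h)$ are \emph{not} in general monotone in $h$, so that one must differentiate in $h$ along a general $C^1$ curve in parameter space and argue the positivity of each contribution directly from the description of the pivotal events rather than by discarding terms. The limiting step is comparatively routine, though it does require the care with the pivotal events just indicated.
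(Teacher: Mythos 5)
The paper does not actually prove Corollary \ref{tal1.3xx}: the remark following it only names two possible routes (a ``rather cumbersome'' decoding of the $\{0,\cdots,k\}$-valued variables into $\{0,1\}$-valued ones plus approximation arguments, or Rossignol's direct extension of Theorem 1.5 of \cite{Ta} to $\{0,\cdots,k\}^n$ and then to countable products) and defers the details to \cite{Ro07b}. Your proposal is a fleshed-out version of the first route, and its skeleton is the right one: the generalized Russo formula via the conditional thresholds $\tau_i(X_{-i})$, the lower bound $\dot q_j\ge c(h_1,h_2)$ for $1\le j\le k$ with $\dot q_0=\dot q_{k+1}=0$, integration of the logarithmic-derivative inequality exactly as in the passage from Theorem \ref{tal1.2} to Corollary \ref{tal1.3}, and a final approximation step for non-cylinder events. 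Those parts are correct.

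There remain, however, three genuine gaps. First, the central analytic input --- an influence inequality of the form $\sum_i\Puh(\cdot_i)\ge K^{-1}\log(1/\sup_i\Puh(\cdot_i))\,\Puh(A)(1-\Puh(A))$ for product measures on $\{0,\cdots,k\}^V$ --- is precisely the content that needs proving; it is not literally contained in the results of \cite{Ta} quoted here (which concern $\{0,1\}^n$ with a single parameter $p$), and your Bernoulli decoding produces a product of Bernoulli laws with parameters $q_j(h)/q_{j-1}(h)$ that are not bounded away from $0$ and $1$, so the constant must be tracked there too. This is exactly the step the paper outsources to Rossignol. Second, your detour through $R_i=\{1\le\tau_i\le k\}$ and back to $A_i$ costs a factor $q_k(h_1)$, i.e.\ an additive loss $\log(1/q_k(h_1))$ in $\log(1/\bar\ep')$. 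This is not absorbable into a constant $K_2$ independent of $h_1,h_2$: when $\bar\ep$ is of order $q_k(h_1)$ and $c(h_1,h_2)(h_2-h_1)$ is large, your bound degenerates to something of order $1$ while the claimed right-hand side is small, and the inequality is not vacuous there. The detour is also unnecessary: since $A_i\subseteq R_i$, the Russo step already gives $\frac{d}{dh}\Puh(A)\ge c(h_1,h_2)\sum_i\Puh(A_i)$, so what you need is the influence inequality stated directly for the internal pivotal events $A_i$ (as in Theorem \ref{tal1.2}), not for $R_i$. Third, in the limiting step the inequality $\limsup_n\bar\ep_n\le\bar\ep$ requires control of $\Puh((A(n))_i)-\Puh(A_i)$ that is uniform over $i\in I$ and $h\in(h_1,h_2)$; the definition of strong approximability only gives $\Puh(A(n)\triangle A)\to0$ for each fixed $h$, pivotality is not continuous under symmetric-difference approximation, and ``monotone truncations'' is an intention rather than an argument. (A safer route, and the one implicit in the paper's application, is to prove the cylinder inequality with $\bar\ep_n$ and then show for the specific approximants used that $(A(N))_i$ is contained in $A_i$ up to an event whose probability is controlled uniformly.)
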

{\bf Remark:} This Corollary can be proved by suitably decoding $\{0, \cdots,k\}$ valued random
variables in terms of i.i.d. $\{0, 1\}$ valued random variables, and by careful approximation
arguments. However, such proof is rather cumbersome, and R. Rossignol (private communication) has pointed out
that it is possible to prove
Corollary \ref{tal1.3xx} in a more direct way by:
(a) Proving, for all positive $k$ `at once', an extension to spaces $\{0, \cdots, k\}^n$
of Theorem 1.5 in \cite{Ta}. (That theorem is a `functional' generalization of Theorem 1.1 in \cite{Ta}, of which
Theorem  \ref{tal1.2} above is an easy consequence). (b) Generalize the extension in (a) to countable product spaces by
`conditioning on the first $m$ coordinates, and then letting $m \rightarrow \infty$'. His proof even works for all
increasing events $A \subset \{0, 1, \cdots,k\}^I$. His arguments will appear in \cite{Ro07b}.

\end{subsection}
\begin{subsection}{Mixing property} \label{mix}
In this subsection we show that random variables $\si_v, v \in Z^2$ that
satisfy properties (i)-(iv) in Subsection 2.1, have certain very convenient spatial
mixing properties. \\
We say that a vertex $v$ is $l$ {\it determined} (w.r.t. the $X$ configuration)
if $X_{i_1}(v), \cdots, X_{i_l}(v)$ determine $\si_v$. A set of vertices $W$ is 
said to be $l$
determined if every $v\in W$ is $l$ determined.
From property (ii) in Section 2.1 we have

\begin{eqnarray}
\label{lconc}
\Puh(W \mbox{ not } l \mbox{ determined }) & \leq &
|W| \, \max_{v \in W} \Puh(v \mbox{ not } l \mbox{ determined }) \\ \nonumber
\, & \leq & |W| \frac{C_0}{l^{2 + \ga}},
\end{eqnarray}
with $C_0$ as in property (ii).

\begin{lem} \label{mix-lem} 
Let $k$ be a positive integer and let $U$ and $V$ be finite subsets of $\Z^2$ that have
distance larger than $k$ to each other. Let $A$ be an event that is defined in terms of the
random variables $\si_v, v \in U$ and $B$ an event that is defined in terms of the
random variables $\si_v, v \in V$. Then, with $\al$ and $\gamma$ as in properties (ii) and (iii)
in section 2.1,

\begin{equation} \label{mix-eq}
|\Puh(A \cap B) - \Puh(A) \Puh(B)| \leq 
2 (|U| + |V|) \,\, \frac{C_0}{\lfloor \al k \rfloor^{2+\ga}}.
\end{equation}
\end{lem}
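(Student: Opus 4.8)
The plan is to exploit the finitary representation to replace each spin $\si_v$ by an ``$\lfloor\al k\rfloor$-local'' version that depends only on the first $m := \lfloor \al k\rfloor$ of the variables $X_{i_1(v)}, X_{i_2(v)},\dots$, and then use property (iii) (mixing) to conclude that the localized version of $A$ depends on a block of $X$-variables that is disjoint from the block governing the localized version of $B$; independence of the $X_i$'s then makes the two localized events exactly independent, and \eqref{lconc} controls the error incurred by the localization.

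\smallskip\noindent
More precisely, first I would set $m = \lfloor \al k\rfloor$ and let $E_U$ be the event that $U$ is $m$-determined and $E_V$ the event that $V$ is $m$-determined; by \eqref{lconc}, $\Puh(E_U^c)\le |U|\,C_0/m^{2+\ga}$ and similarly for $E_V^c$. On the event $E_U$, the indicator $\mathbf 1_A$ is a function of the variables $(X_{i_j(v)} : v\in U,\ 1\le j\le m)$, and on $E_V$ the indicator $\mathbf 1_B$ is a function of $(X_{i_j(w)} : w\in V,\ 1\le j\le m)$. Since $U$ and $V$ have mutual distance larger than $k$, for every $v\in U$ and $w\in V$ we have $\|v-w\|>k$, hence $m=\lfloor\al k\rfloor \le \al k<\al\|v-w\|$, so property (iii) gives $\{i_1(v),\dots,i_m(v)\}\cap\{i_1(w),\dots,i_m(w)\}=\emptyset$. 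Therefore the index set $S_U := \bigcup_{v\in U}\{i_1(v),\dots,i_m(v)\}$ is disjoint from $S_V := \bigcup_{w\in V}\{i_1(w),\dots,i_m(w)\}$.

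\smallskip\noindent
The cleanest way to make the error bookkeeping rigorous is to introduce genuinely localized events. Fix a reference value, say $0$, and for a configuration $x\in\{0,\dots,k\}^I$ let $x^{[S]}$ agree with $x$ on $S$ and equal $0$ off $S$. Define $\tilde A := \{x : x^{[S_U]}\in A\}$ and $\tilde B := \{x : x^{[S_V]}\in B\}$. Then $\tilde A$ is $\sigma(X_i : i\in S_U)$-measurable, $\tilde B$ is $\sigma(X_i:i\in S_V)$-measurable, and since $S_U\cap S_V=\emptyset$ and the $X_i$ are independent, $\Puh(\tilde A\cap\tilde B)=\Puh(\tilde A)\Puh(\tilde B)$ exactly. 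It remains to compare $A$ with $\tilde A$: on the event $E_U$, any vertex $v\in U$ is $m$-determined, so $\si_v(x)$ depends only on $x_{S_U}$, and in particular $\si_v(x)=\si_v(x^{[S_U]})$ — wait, this last identity needs $x^{[S_U]}$ to be $m$-determined at $v$ too, which it is precisely when $x$ is, since $m$-determinedness of $v$ is itself a function of $x_{\{i_1(v),\dots,i_m(v)\}}\subseteq x_{S_U}$. Hence on $E_U$ we have $\mathbf 1_A(x)=\mathbf 1_{\tilde A}(x)$, so $\Puh(A\triangle\tilde A)\le\Puh(E_U^c)\le |U|C_0/m^{2+\ga}$, and similarly $\Puh(B\triangle\tilde B)\le |V|C_0/m^{2+\ga}$. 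Combining,
\begin{align*}
|\Puh(A\cap B)-\Puh(A)\Puh(B)|
&\le |\Puh(A\cap B)-\Puh(\tilde A\cap\tilde B)| + |\Puh(\tilde A)\Puh(\tilde B)-\Puh(A)\Puh(B)| \\
&\le \Puh(A\triangle\tilde A)+\Puh(B\triangle\tilde B) + \Puh(A\triangle\tilde A)+\Puh(B\triangle\tilde B) \\
&\le 2(|U|+|V|)\,\frac{C_0}{\lfloor\al k\rfloor^{2+\ga}},
\end{align*}
using $|\Puh(A\cap B)-\Puh(\tilde A\cap\tilde B)|\le\Puh(A\triangle\tilde A)+\Puh(B\triangle\tilde B)$ and the analogous bound $|\Puh(\tilde A)\Puh(\tilde B)-\Puh(A)\Puh(B)|\le \Puh(A\triangle\tilde A)+\Puh(B\triangle\tilde B)$ for the product (each factor changes by at most the corresponding symmetric-difference probability, and probabilities are bounded by $1$).

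\smallskip\noindent
The main obstacle — and the point requiring the most care — is the measurability/localization step: verifying that ``$v$ is $m$-determined'' is itself measurable with respect to the finite block $x_{\{i_1(v),\dots,i_m(v)\}}$, and that replacing coordinates outside $S_U$ by a fixed value does not disturb the value of $\si_v$ when $v$ is $m$-determined. Once one is comfortable that $m$-determinedness is an intrinsic property of the finite tuple (which is immediate from the definition ``$y$ determines $f$''), the rest is the routine symmetric-difference estimate above. I would also remark that the constant $2$ in \eqref{mix-eq} is not optimized; a slightly sharper argument tracking which of $A,B$ fails to localize would give a smaller constant, but this is irrelevant for the applications.
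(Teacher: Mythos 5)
Your proof is correct and follows essentially the same route as the paper: both localize $A$ and $B$ to the first $m=\lfloor\al k\rfloor$ coordinates via the ``$m$-determined'' events, invoke property (iii) to get disjoint index blocks and hence exact independence of the localized events, and control the error with \eqref{lconc}. The only (cosmetic) difference is that the paper works with $\hat A = A\cap\{U\text{ is }m\text{-determined}\}$ rather than your zeroed-out version $\tilde A$, which in fact yields the slightly sharper constant $(|U|+|V|)$ in place of $2(|U|+|V|)$; either way the stated bound holds.
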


\begin{proof}
Let $\hat A$ be the event $A \cap \{U \mbox{ is } \lfloor \al k \rfloor \mbox{ determined }\}$ \\
and $\hat B$ the event $B \cap \{V \mbox{ is } \lfloor \al k \rfloor \mbox{ determined }\}$. 
Note that, for each vertex $v$ and each integer $l$, the event that $v$ is $l$ determined depends
only on the random variables $X_{i_1(v)}, \cdots X_{i_l(v)}$. This, property (iii) in Section
2.1 and the fact that $U$ and $V$ have distance larger than $k$, implies that $\hat A$ and $\hat B$ are independent:

\begin{equation} \label{pf-mix-1}
\Puh(\hat A \cap \hat B) = \Puh(\hat A) \Puh(\hat B).
\end{equation}

Further, using \eqref{lconc},

\begin{equation} \label{pf-mix-2}
\Puh(A \setminus \hat A) \leq \Puh(U \mbox{ not } \lfloor \al k \rfloor \mbox{ determined })
\leq |U| \, \frac{C_0}{\lfloor \al k \rfloor^{2+\ga}},
\end{equation}
and similarly
\begin{equation} \label{pf-mix-3}
\Puh(B \setminus \hat B) \leq \Puh(V \mbox{ not } \lfloor \al k \rfloor \mbox{ determined })
\leq |V| \, \frac{C_0}{\lfloor \al k \rfloor^{2+\ga}}.
\end{equation}

From \eqref{pf-mix-1} - \eqref{pf-mix-3} Lemma \ref{mix-lem} follows straightforwardly.
\end{proof}
\end{subsection}

\begin{subsection}{Positive association} \label{sect-pa}

The next Lemma is about positive association.

\begin{lem}
\label{pa-lem}
The system $(\si_v, v\in \Z^2)$, described in Section 2.1, is positively associated.
That is, for all increasing (in terms of the $\si$ variables) events $A$ and $B$,
$\Puh(A \cap B) \geq \Puh(A) \Puh(B)$.
\end{lem}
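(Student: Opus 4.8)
The plan is to deduce positive association of the $\si$ process from the classical Harris--FKG inequality for the underlying i.i.d. $X$ variables, using the monotone representation (property (i)) to push increasing events through the map $X \mapsto \si$. First I would recall that the product measure $\Puh$ on $\{0,\dots,k\}^I$ is positively associated: this is the Harris/FKG inequality for products of totally ordered spaces (each factor $\{0,\dots,k\}$ with its natural order is a finite distributive lattice, and a product measure trivially satisfies the FKG lattice condition). Hence for all increasing (in the $X$ variables) events $\calE, \calF \subset \{0,\dots,k\}^I$ we have $\Puh(\calE \cap \calF) \geq \Puh(\calE)\Puh(\calF)$.

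Next I would observe that if $A$ is an increasing event in the $\si$ variables, then its preimage $\tilde A := \{x \in \{0,\dots,k\}^I : \si(x) \in A\}$ is an increasing event in the $X$ variables. This is exactly property (i): each $\si_v$ is an increasing function of $(X_i, i \in I)$, so $x \leq x'$ implies $\si_v(x) \leq \si_v(x')$ for every $v$, i.e. $\si(x) \leq \si(x')$ coordinatewise; since $A$ is increasing, $\si(x) \in A$ forces $\si(x') \in A$. Therefore $\tilde A$ is increasing, and likewise $\tilde B$ for an increasing $\si$-event $B$. Moreover $\widetilde{A \cap B} = \tilde A \cap \tilde B$, and by definition of the induced measure $\Puh(A) = \Puh(\tilde A)$, $\Puh(B) = \Puh(\tilde B)$, $\Puh(A \cap B) = \Puh(\tilde A \cap \tilde B)$. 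Combining with the Harris--FKG inequality for $\Puh$ on the $X$-space gives
\[
\Puh(A \cap B) = \Puh(\tilde A \cap \tilde B) \geq \Puh(\tilde A)\,\Puh(\tilde B) = \Puh(A)\,\Puh(B),
\]
which is the claim.

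The only genuine technical point — and the main obstacle, such as it is — is measurability and the infinitude of $I$: I need to know that the Harris--FKG inequality holds for the product measure $\Puh$ over the countably infinite index set $I$, not just for finite products. The standard route is to first establish it for increasing events depending on finitely many coordinates (the finite case, which is classical), and then pass to the limit: a general increasing measurable event can be approximated in $L^1(\Puh)$ by cylinder events, or one conditions on finitely many coordinates and invokes the martingale convergence theorem, using that conditional expectations of increasing functions remain increasing. Either way the inequality is preserved in the limit. Since property (i) already guarantees each $\si_v$ is measurable and increasing, no further regularity is needed, and the argument goes through for all increasing $\si$-events $A, B$ without a finiteness restriction.
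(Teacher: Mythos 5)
Your proposal is correct and follows exactly the paper's argument: the paper's proof is the two-line observation that the independent $X_i$ are positively associated by Harris--FKG and that the $\si_v$ are increasing functions of them, so increasing $\si$-events pull back to increasing $X$-events. Your additional remarks on the pullback and on extending Harris--FKG to the countable product are just a fuller write-up of the same route.
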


\begin{proof}
\label{pa}
The random variables $(X_i, i \in I)$ are independent $\{0, 1, \cdots, k\}$ valued random variables
and hence (Harris-FKG inequality) positively associated. Since the $\si$ variables are increasing functions of the
$X$ variables, the statement of the lemma follows. 

\smallskip\noindent
{\bf Remark:} Note that the $\si_v, v \in \Z^2$ not necessarily satisfy the strong FKG condition.

\end{proof}

\end{subsection}
\begin{subsection}{RSW properties}
\label{rsw}
As said in the Introduction, Bollob\'as and Riordan (\cite{BoRi2}) obtained a new
RSW-like result for the
Voronoi percolation model. The conclusion of their RSW theorem is weaker than that of the classical
RSW theorem, but its proof is more robust: it does not (like the proof of `classical' RSW) involve conditioning
on the lowest crossing. It works, as they pointed out, not only for
the Voronoi model but for a large class of percolation models. In fact the conditions are as follows
(see \cite{BoRi3} and Section 4.3 in \cite{BeBrVa}) : \\
(a) Crossings of rectangles must be defined in terms of `geometric paths' in such a way 
that (for example)
horizontal and vertical crossings meet. (This enables the often used tool of pasting together
paths). \\
(b) Certain increasing events (in particular events of the form that there
is a $+$ path between two given sets of vertices) must be positively correlated. \\
(c) The distribution of the random field $(\si_v, v \in \Z^2)$ should be invariant under
the symmetries of $\Z^2$. \\
(d) Finally, certain mixing properties are needed.

\smallskip\noindent
The model in Theorem \ref{mainthm} satisfies the above conditions:
As to (a) these are simply well-known properties for percolation on the square lattice and
its matching lattice, and have nothing to do with the distribution $\Puh$. As to (b) and (c), these are taken care of by Lemma \ref{pa-lem} and by
property (iv) in Section 2.1 respectively. Finally, as to (d), 
the following property (here formulated in
our notation) is more than enough (see Remark 4.5 in \cite{BeBrVa}: 
For each $\ep >0$ there is an $l$ such that for all $k > l$, all $k$ by $2 k$ rectangles
$R_1$ and $R_2$ that have distance larger than $k/100$ to each other, and all events $A$ and
$B$ that are defined in terms of the random variables $(\si_v, v \in R_1)$ and in terms
of the random variables $(\si_v, v \in R_2)$ respectively, the following holds:
$|\Puh(A \cap B) - \Puh(A) \Puh(B)| < \ep$.
For our model this is immedially guaranteed by Lemma \ref{mix-lem}.
Hence our model belongs to the class of models mentioned above.

For this class of models the Bollob\'as-Riordan RSW like theorem (\cite{BoRi2} says that,
if the $\liminf$, as $s \ra \infty$,
of the probability of having a horizontal crossing of the box $[0,s] \times [0,s]$ is positive,
then, for every $\rho > 0$, the $\limsup_{s \ra \infty}$ of the probability of a horizontal crossing of
the box
$[0, \rho s] \times [0,s]$ is positive.

It is pointed out in \cite{BeBrVa} that small modifications
of the proof of \cite{BoRi2} give in fact the stronger result (for the same class of models as
described above) that if for {\it some}
$\rho > 0$ the $\limsup_{s \ra \infty}$ of the probability that there is a horizontal
crossing of the box $[0, \rho s] \times [0,s]$ is positive, then this holds for {\it all} $\rho >0$.
(Note the occurrence of $\limsup$ and $\liminf$).
Or, equivalently, if for some $\rho\,$ $\lim_{s \ra \infty}$ of the probability that there is a horizontal
crossing of the box $[0, \rho s] \times [0,s]$ equals $0$, then this limit equals $0$ for
every $\rho > 0$.
As remarked above, our current percolation model 
satisfies the required properties.
So we get the above mentioned RSW result. Before we state this explicitly we introduce the following notation.
Let $H(n,m)$ (respectively $V(n,m)$)  denote the event that there is a
horizontal (respectively vertical) $+$ crossing of the box $[0,n] \times [0,m]$.
Further, let $H^{- *}(n,m)$ and $V^{- *}(n,m)$ be the analogs of $H(n,m)$ and $V(n,m)$ for $-$
crossings in the $*$ lattice. In this notation the above mentioned RSW-like statement is:

\begin{lem}
\label{RSW-lem}
(a) If
$$\lim_{n \ra \infty} \Puh(H(\rho n, n)) = 0, \mbox{for some } \rho > 0,$$
then
$$\lim_{n \ra \infty} \Puh(H(\rho n, n)) = 0, \mbox{for all } \rho > 0.$$

\smallskip\noindent
(b) The analogous result, with $H$ replaced by $H^{- *}$, also holds.
\end{lem}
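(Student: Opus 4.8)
The plan is to deduce both parts directly from the robust box-crossing theorem of Bollob\'as and Riordan (\cite{BoRi2}), in the strengthened form stated in \cite{BeBrVa}, once we have checked that our model meets the hypotheses (a)--(d) listed at the start of this subsection. Most of this verification has already been recorded above: condition (a) is a standard combinatorial feature of the square lattice and its matching lattice and does not involve $\Puh$ at all; condition (b), the positive correlation of the relevant connection events, is Lemma \ref{pa-lem}; condition (c), invariance under the symmetries of $\Z^2$, is property (iv) of Section \ref{term}. For condition (d) I would invoke the decorrelation estimate \eqref{mix-eq} of Lemma \ref{mix-lem}: given $\ep > 0$ and considering $k$ by $2k$ rectangles $R_1, R_2$ at distance more than $k/100$, Lemma \ref{mix-lem} bounds $|\Puh(A\cap B) - \Puh(A)\Puh(B)|$ by $2(|R_1|+|R_2|)\,C_0/\lfloor \al k/100\rfloor^{2+\ga}$ for all events $A, B$ measurable with respect to $(\si_v, v\in R_1)$ and $(\si_v, v\in R_2)$ respectively; since $|R_i|$ is of order $k^2$ while the denominator is of order $k^{2+\ga}$, this is less than $\ep$ for all $k$ larger than some $l = l(\ep)$, which is exactly the mixing statement required in Remark 4.5 of \cite{BeBrVa}. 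Note that all four properties are uniform in $h$, which is what the cited theorem needs.

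Granting the hypotheses, the strengthened theorem asserts: if for some $\rho_0 > 0$ one has $\limsup_{n\to\infty}\Puh(H(\rho_0 n, n)) > 0$, then $\limsup_{n\to\infty}\Puh(H(\rho n, n)) > 0$ for every $\rho > 0$. For part (a) I would argue by contraposition. Assume $\lim_{n\to\infty}\Puh(H(\rho_0 n, n)) = 0$ for some $\rho_0 > 0$. Then in particular $\limsup_{n\to\infty}\Puh(H(\rho_0 n, n)) = 0$, so the theorem forces $\limsup_{n\to\infty}\Puh(H(\rho n, n)) = 0$ for all $\rho > 0$; since $0 \le \liminf \le \limsup$, the full limit is $0$ for all $\rho$, which is the claim.

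For part (b) the same argument applies verbatim with $H$ replaced by $H^{- *}$, provided the four hypotheses also hold for $-$ crossings in the matching lattice. Condition (a) is again purely combinatorial (horizontal and vertical $*$-paths certainly meet, $*$-connectivity being even richer). Condition (c) is unchanged, since property (iv) concerns the symmetries of $\Z^2$, and these also permute the edges of the matching lattice. Condition (d) is unaffected, because Lemma \ref{mix-lem} bounds the correlation of \emph{arbitrary} events measurable with respect to $(\si_v, v\in R_1)$ and $(\si_v, v\in R_2)$, in particular of $*$-connection events. The one point deserving a word is condition (b): a $-*$ connection event is a \emph{decreasing} function of the $\si$ variables, but the family of decreasing events is positively associated whenever the family of increasing events is (apply Lemma \ref{pa-lem} after replacing each $\si_v$ by $-\si_v$), so the required positive correlations still hold.

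I do not expect a serious obstacle here; the work is entirely in checking hypotheses rather than in a new argument. The one place requiring care is making sure the decorrelation bound of Lemma \ref{mix-lem} is quoted in precisely the uniform-in-$h$, comparable-size-rectangles form demanded in \cite{BeBrVa} (Remark 4.5), and --- for part (b) --- noting explicitly that positive association survives the passage from increasing to decreasing events, so that the matching-lattice version of the box-crossing theorem is legitimately applicable.
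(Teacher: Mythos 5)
Your proposal is correct and follows essentially the same route as the paper: Lemma \ref{RSW-lem} is obtained there precisely by verifying conditions (a)--(d) via Lemma \ref{pa-lem}, property (iv), and Lemma \ref{mix-lem}, and then citing the strengthened Bollob\'as--Riordan box-crossing theorem from \cite{BeBrVa}. Your extra remarks (the explicit rate comparison $k^2$ versus $k^{2+\ga}$ for condition (d), and the passage from increasing to decreasing events for the $- *$ case) are correct and only make explicit what the paper leaves implicit.
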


Note that, since a box either has a horizontal $+$ crossing or a vertical
$- *$ crossing (and using rotation symmetry), we have that for each $k$ and $l \,$, 
$\Puh(H(k,l)) = 1 - \Puh(H^{- *}(l,k))$. Combining this with Lemma \ref{RSW-lem} gives
immediately 

\begin{cor}
\label{RSW-cor} 

\smallskip\noindent
(a) If
$$\lim_{n \ra \infty} \Puh(H(\rho n, n)) = 1 \mbox{ for some } \rho > 0,$$
then
$$\lim_{n \ra \infty} \Puh(H(\rho n, n)) = 1 \mbox{ for all } \rho > 0.$$

\smallskip\noindent
(b) The analogous result, with $H$ replaced by $H^{- *}$, also holds.
\end{cor}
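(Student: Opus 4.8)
The plan is to deduce Corollary \ref{RSW-cor} directly from Lemma \ref{RSW-lem} by exploiting the self-duality relation between $+$ crossings on the square lattice and $-*$ crossings on the matching lattice, together with the $90^\circ$ rotation symmetry from property (iv). The basic combinatorial fact is that for any box $[0,k]\times[0,l]$, exactly one of the following holds: either there is a horizontal $+$ crossing, or there is a vertical $-*$ crossing. Hence
$$\Puh(H(k,l)) = 1 - \Puh(V^{-*}(k,l)),$$
and applying rotation by $90^\circ$ (which maps a $k\times l$ box to an $l\times k$ box and swaps the horizontal and vertical directions) gives $\Puh(V^{-*}(k,l)) = \Puh(H^{-*}(l,k))$, so that $\Puh(H(k,l)) = 1 - \Puh(H^{-*}(l,k))$, as already noted in the text.

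For part (a): suppose $\lim_{n\ra\infty}\Puh(H(\rho n, n)) = 1$ for some $\rho > 0$. I would set $k = \rho n$, $l = n$ in the displayed duality identity (rounding $\rho n$ to an integer, which does not affect limits) to get $\Puh(H^{-*}(n,\rho n)) = 1 - \Puh(H(\rho n,n)) \to 0$. This is a statement of the form ``$\lim \Puh(H^{-*}(\rho' m, m)) = 0$ for some $\rho' > 0$'': indeed, writing $m = \rho n$ and $\rho' = 1/\rho$, the box $[0,n]\times[0,\rho n]$ is $[0,\rho' m]\times[0,m]$, so the crossing event $H^{-*}(n,\rho n)$ is exactly $H^{-*}(\rho' m, m)$ (up to integer rounding). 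Now Lemma \ref{RSW-lem}(b) applies and yields $\lim_{m\ra\infty}\Puh(H^{-*}(\si m, m)) = 0$ for \emph{all} $\si > 0$. Finally I translate back: for an arbitrary target aspect ratio $\rho > 0$, apply this with $\si = 1/\rho$ and use the duality identity in reverse, $\Puh(H(\rho n, n)) = 1 - \Puh(H^{-*}(n, \rho n)) = 1 - \Puh(H^{-*}((1/\rho) m, m)) \to 1$ with $m = \rho n$. This gives $\lim_{n\ra\infty}\Puh(H(\rho n, n)) = 1$ for all $\rho > 0$, as desired.

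Part (b) is entirely symmetric: it is obtained from the same argument with the roles of $H$ and $H^{-*}$ interchanged, now invoking Lemma \ref{RSW-lem}(a) instead of (b); alternatively, one simply repeats the chain of duality identities starting from a hypothesis on $H^{-*}$ rather than $H$. No new ingredient is needed.

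There is essentially no obstacle here — the corollary is a formal consequence of Lemma \ref{RSW-lem} and the duality/rotation symmetry. The only minor bookkeeping point is the passage through integer parts: quantities like $\rho n$ and $1/\rho \cdot m$ need to be replaced by their integer parts, and one should check that the crossing probabilities change negligibly under such rounding (this follows because the relevant boxes differ by a bounded-width strip and, if one wishes to be fully careful, because crossing probabilities are monotone in the box dimensions so one can sandwich between nearby integer-sided boxes — or one simply notes that Lemma \ref{RSW-lem} is itself a statement about limits over all $\rho>0$ and absorbs the rounding). I would mention this in one sentence and not belabour it.
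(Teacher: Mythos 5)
Your argument is exactly the paper's: the paper derives the corollary from the duality identity $\Puh(H(k,l)) = 1 - \Puh(H^{-*}(l,k))$ (a box has a horizontal $+$ crossing iff it has no vertical $-*$ crossing, plus rotation symmetry) combined with Lemma \ref{RSW-lem}, which is precisely the chain of translations you spell out. Your additional remarks on integer rounding and monotone sandwiching only make explicit what the paper's ``gives immediately'' leaves implicit, so the proposal is correct and takes the same route.
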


\end{subsection}
\begin{subsection}{Finite-size criterion}

\begin{lem}
\label{fs-lem}
 There is an $ \hat \ep > 0$ and an integer $\hat N$ such that for all $N \geq \hat N$ the
following holds: \\
\begin{equation}
\label{fs-cond}
\mbox{(a) If }\Puh(V(3 N, N)) < \hat\ep,
\end{equation}
then the distribution of $|C^+|$ has an exponential tail.

\smallskip\noindent
\begin{equation}
\label{fs-cond}
(b) If \Puh(V^{- *}(3 N, N)) < \hat\ep,
\end{equation}
then the distribution of $|C^{- *}|$ has an exponential tail.
\end{lem}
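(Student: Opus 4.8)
The plan is to prove Lemma \ref{fs-lem} by the standard finite-size-criterion argument, adapted to our setting via the mixing Lemma \ref{mix-lem}. I will only treat part (a), since part (b) follows by exactly the same reasoning applied to $-$ crossings in the $*$ lattice (the RSW-type inputs and mixing estimates hold there as well, by Lemma \ref{RSW-lem}(b) and the obvious $*$-analog of Lemma \ref{mix-lem}).

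\textbf{Step 1: Reduce to a good crossing configuration of rectangles at a single scale.} Suppose $\Puh(V(3N,N)) < \hat\ep$ for some large $N$. Using the combination of Lemma \ref{RSW-lem} and the duality identity $\Puh(H(k,l)) = 1 - \Puh(H^{-*}(l,k))$ (equivalently Corollary \ref{RSW-cor}), a small probability for a vertical $+$ crossing of a $3N \times N$ rectangle forces the probability of the various rectangle-crossing events at scale $N$ that are needed to \emph{block} a large $+$ cluster to be close to $1$. Concretely, I would fix a finite collection of translates/rotations of $3N \times N$ rectangles arranged around the annulus $\La(3N)\setminus\La(N)$ (say) so that the event ``every rectangle in the collection has a $-*$ crossing in the short direction'' implies the existence of a $-*$ circuit in that annulus surrounding the origin, and hence that $C_0^+$ does not reach $\partial\La(3N)$. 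The number of rectangles in this collection is an absolute constant.

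\textbf{Step 2: Decorrelate the crossing events and get a one-scale bound.} The rectangles in the collection are \emph{not} disjoint and the field is dependent, so I cannot simply multiply probabilities. Instead I use positive association (Lemma \ref{pa-lem}) together with the mixing Lemma \ref{mix-lem}: by the FKG-type bound each individual blocking event has probability $\geq 1 - c\hat\ep$ for an absolute constant $c$, and by Lemma \ref{mix-lem} the joint probability of all of them differs from the product by an error that is $O(N^{-(2+\ga)})$ (using that the relevant rectangles, after enlarging $N$ if necessary, can be taken at mutual distance of order $N$ in the places where independence is invoked; where they genuinely overlap I instead use positive correlation). The upshot is: for $N$ large and $\hat\ep$ small, the probability that there is a $-*$ circuit in $\La(3N)\setminus\La(N)$ surrounding $0$ is at least, say, $9/10$; equivalently $\Puh(0 \lra \partial\La(3N) \text{ by a } + \text{ path}) \leq 1/10$.

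\textbf{Step 3: Renormalize / iterate over scales.} This is the crux. I would set up a renormalization in which each block of side $3^{j}N$ is declared ``good'' if it contains a surrounding $-*$ circuit in the appropriate annulus. Using the above one-scale estimate at each scale together with Lemma \ref{mix-lem} to control the (weak) dependence between the good-block events at well-separated locations, a block at scale $3^{j+1}N$ fails to be good only if several sub-blocks at scale $3^j N$ fail, an event whose probability decays geometrically in $j$ provided $\hat\ep$ was chosen small and $\hat N$ large. A standard Borel–Cantelli / contour-counting argument then shows that, almost surely, the origin is surrounded by $-*$ circuits at a positive density of scales, which forces $|C_0^+| < \infty$ a.s.; moreover, tracking the geometric rate gives that for $|C_0^+|$ to exceed a box of side $R$, a cascade of $\Theta(\log R)$ independent-ish ``bad'' events must occur, each of probability bounded away from $1$, yielding $\Puh(|C_0^+| \geq R) \leq e^{-c\log R \cdot \log(1/\text{const})}$ — in fact an exponential tail in the \emph{radius}, and hence (since a cluster of volume $n$ has radius at least $\sqrt{n}/C$) an exponential tail in $|C^+|$ as claimed. (Minor point: one must also sum the mixing error terms $\sum_j |\text{blocks at scale } j|\, (3^jN)^{-(2+\ga)}$, which converges because $\ga>0$; this is exactly where property (ii)'s exponent $2+\ga$ rather than just $2$ is used.)

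\textbf{Main obstacle.} The delicate point is Step 2/Step 3: making the renormalization work with only \emph{polynomial} mixing (rather than exponential or finite-range) and without any kind of strong-FKG or domain-Markov property — only positive association (Lemma \ref{pa-lem}) and the quantitative decorrelation of Lemma \ref{mix-lem}. One has to be careful that the mixing error incurred at scale $j$ is small \emph{relative to} the (roughly constant) gap between the good-block probability and $1$, summed over all scales and all relevant block positions; the exponent $2+\ga$ in property (ii) is precisely what makes this sum finite. A secondary bookkeeping issue is handling the genuine overlaps among the blocking rectangles in Step 1 via positive correlation rather than independence, and verifying that the resulting one-scale estimate is still strong enough (probability close to $1$) to seed the iteration.
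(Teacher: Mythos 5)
Your Steps 1--2 are fine as far as they go (indeed you do not even need RSW or mixing there: duality plus positive association of the four long $-*$ crossings already gives the annulus circuit with probability $\geq (1-c\hat\ep)^4$). The genuine gap is in Step 3, and it is fatal to the stated conclusion. A cascade of $\Theta(\log R)$ failed scales, each failing with probability bounded away from $1$, gives $\Puh(0 \lra \partial\La(R)) \leq (1-\de)^{c\log R} = R^{-c'}$, i.e.\ a \emph{power law} in the radius, not an exponential tail; even the more optimistic reading of your recursion ($p_{j+1}\leq Cp_j^2$) yields only a stretched exponential in $R$. Moreover, even if you did obtain a true exponential tail in the radius, the inequality ``radius $\geq \sqrt{n}/C$'' converts this only into $\Puh(|C^+|\geq n)\leq e^{-c\sqrt n}$, a stretched exponential in the volume --- still not the exponential tail in $|C^+|$ that the lemma asserts. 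The number of low-probability events you force must be proportional to $n$ itself, and a scale-by-scale circuit construction around the origin can never produce that many.

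The paper's proof (following Kesten's finite-size-criterion argument) works at a single scale and directly with the volume: tile $\Z^2$ by $N$-boxes $Q_N(x)$, call $x$ good if $Q_N(x)$ meets $C^+$, and observe that the good boxes form a connected set $S$ with $|S|\geq |C^+|/N^2$, and that (away from the origin) each good $x$ forces a $+$ crossing in the easy direction of one of four $3N\times N$ rectangles around $Q_N(x)$ --- an event of probability $\leq 4\hat\ep$ by hypothesis. Independence of these events for well-separated $x$ is obtained not by paying mixing error terms but by intersecting with the event that the relevant region is ``$N$ determined'' (property (ii) bounds the defect by $C_0N^2/N^{2+\ga}\to 0$, and property (iii) then gives exact independence). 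A lattice-animal count ($\leq C_7^k$ connected sets of size $k$ containing $0$) against $(4\hat\ep + o(1))^{k/C}$ then gives $\Puh(|C^+|\geq n)\leq C\la^n$ with $\la<1$. If you want to salvage your approach, replace the nested annuli by this connected-chain-of-bad-blocks structure, whose cardinality scales linearly in $|C^+|$.
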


\begin{proof}
The proof below follows the main line of reasoning in the proof of the analogous well-known
result for ordinary percolation (see \cite{Ke81}).
Let $N$ and $\ep$ be such that $\Puh(V(3 N, N)) <\ep$. Cover $\Z^2$ by squares

$$Q_N(x) := N x + [0, N]^2, \, x \in \Z^2.$$
We will often write simply $Q_N$ for $Q_N(\bf 0)$. \\
We say that an $x \in \Z^2$ is {\it good}, if $Q_N(x)$ contains a vertex of $C^+$.
A set $W \subset \Z^2$ is called good if every $x \in W$ is good. Let 
$S$ denote the set of good vertices. From the
definition of `good' it is easy to see that $S$ is a connected subset of the square 
lattice, and that $0 \in S$ unless $C^+ = \emptyset$ (in which case also $S=\emptyset$).
It is also clear that $|S| \geq |C^+|/|Q_N|$, and hence that

\begin{equation}
\label{fsc-eq1}
\Puh\left(|C^+| \geq n\right) \leq \Puh\left(|S| \geq \frac{n}{|Q_N|}\right), \, n = 1, 2, \cdots .
\end{equation}

Let, for $x \in \Z^2$, $R_1(x)$ denote the $3 N \times N$ rectangle `north of 
$Q_N(x)$. More precisely 

$$R_1(x) := N x + [-N, 2N] \times [N, 2 N].$$

Similarly, let $R_2(x)$ be the $3N \times N$ rectangle south of $Q_N(x)$ and
let $R_3(x)$ and $R_4(x)$ be the $N \times 3 N$ rectangles east, respectively west of
$Q_N(x)$.

Define, for each $x \in \Z^2$,the following event (where `easy' stands for `vertical' in case of
a $3 N \times N$ reactangle, and for `horizontal' in case of an $N \times 3 N$ rectangle.

$$A_x := \{\exists i \in \{1, \cdots,4\} \mbox { s.t }
R_i(x) \mbox{ has a } + \mbox{ crossing in the easy direction }\}.$$

It is standard (and easy to check) that for all (except a finite number, say $C_1$)
$x \in \Z^2$ the following inclusion of events holds:

\begin{equation}
\label{fsc-eq2}
\{ x \mbox{ is good} \} \subset A_x.
\end{equation}

Let $R(x) = \cup_{i=1}^4 R_i(x)$. Recall the definition of `$l$ determined' in Section \ref{mix}.
We trivially have

\begin{eqnarray}
\label{fsc-eq3}
\, & \, & A_x \subset B_x, \,\, \mbox{ where } \\ \nonumber
\, & \mbox{where } & \\ \nonumber
\, & \, &B_x := \left(A_x \cap \{ R(x) \mbox { is } N \mbox{ determined }\}\right)
\cup \{R(x) \mbox{ is not } N \mbox{ determined }\}.
\end{eqnarray}

We then get
\begin{eqnarray}
\label{fsc-eq4}
\Puh(B_x) & \leq & \Puh(A_x) + \Puh(R(x) \mbox{ is not } N \mbox{ determined }) \\ \nonumber
\, &\leq& 4 \ep + |R(0)| \max_x \Puh(x \mbox{ is not } N \mbox{ determined }) \\ \nonumber
\, & \leq & 4 \ep + C_2 N^2 \frac{C_0}{N^{2 + \ga}} \\ \nonumber
\, &\leq & 4 \ep + C_3(N), \,\,\,\,\, \mbox{ where } C_3(N) \ra 0 \mbox{ as } N \ra \infty,
\end{eqnarray}
and where the first inequality is trivial, the second follows from our choice of $N$ and $\ep$,
the third follows from \eqref{lconc}, and $C_2$ is a constant.

Let $\al$ be as in property (iii) in Section 2.1. It is easy to see that there is a
constant $C_4 = C_4(\al)$ such that for every
finite set of vertices $x(1), \cdots, x(m)$ satisfying 
that $\min_{1 \leq i < j \leq m} ||x(i) - x(j)|| > C_4(\al)$, the events
$B_{x(i)},\, 1 \leq i \leq m$ are independent. \\

From this (and \eqref{fsc-eq2} - \eqref{fsc-eq4}) it follows easily that there are
a $C_5(\al)$  and $C_6(\al)$ such that for every finite set of vertices $W$,

\begin{eqnarray}
\label{fsc-eq5}
\Puh(W \mbox{ is good }) & \leq & (4 \ep + C_3(N))^{\lfloor(|W| - C_1)/C_5(\al)\rfloor}
\\ \nonumber
\, &\leq^*& (4 \ep + C_3(N))^{|W|/C_6(\al)},
\end{eqnarray}
where the mark * in the last inequality means that that inequality holds for all 
values of $|W|$ that are sufficiently large.

Now we apply this to \eqref{fsc-eq1}. To do this, note that if 
$|S| \geq n/|Q_N|$, there is a good lattice animal $W$ of size $\lfloor\frac{n}{|Q_N|}\rfloor$.
(A lattice animal is a connected set of vertices containing $0$). Using this, \eqref{fsc-eq1},
\eqref{fsc-eq5} and the fact that there is a constant $C_7$ such that the number of lattice sites
of size $k$ is at most $C_7^k$, we get

\begin{eqnarray}
\label{fsc-eq6}
\Puh(|C^+| \geq n) & \leq &
C_7^{\lfloor\frac{n}{|Q_N|}\rfloor} \, \left(4 \ep + C_3(N)\right)^{\lfloor\frac{n}{|Q_N|}\rfloor / C_6(\al)}
\\ \nonumber
\, & \leq & C_8(\ep,N) \,
\left[\left(C_7 \left(4 \ep + C_3(N)\right)^{\frac{1}{C_6(\al)}}\right)^{\frac{1}{|Q_N|}}\right]^n.
\end{eqnarray}
Now take $\hat \ep$ and $\hat N$ such that $C_7 (4 \hat\ep + C_3(\hat N))^{\frac{1}{C_6(\al)}} < 1$
for all $N \geq \hat N$
(which can be done since $C_3(N) \ra 0$ as $N \ra \infty$. 
From \eqref{fsc-eq6} it follows that for this choice of $\hat \ep$ and $\hat N$ the statement in part (a) of
Lemma \ref{fs-lem} holds. By exactly the same arguments (and, if necessary, by decreasing, respectively increasing
the values of $\hat \ep$ and $\hat N$ obtained above), part (b) also follows.
\end{proof}

From the above Lemma we easily get the following.

\begin{cor}
\label{fs-cor}
Let $\hat \ep$ and $\hat N$ as in Lemma \ref{fs-lem}. \\
(a)
If there is an $n \geq \hat N$ with $\Puh(V(3 n,n)) < \hat \ep$, then 
$\Puh(|C^{- *}| = \infty) > 0$.

\smallskip\noindent
(b) If there is an $n \geq \hat N$ with $\Puh(V^{- *}(3 n,n)) < \hat \ep$, then
$\Puh(|C^+| = \infty) > 0$.
\end{cor}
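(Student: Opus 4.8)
The plan is to deduce the corollary from Lemma \ref{fs-lem} together with the RSW-type dichotomy of Lemma \ref{RSW-lem} and Corollary \ref{RSW-cor}, by an argument of the form ``if there is no infinite $-*$ cluster then crossing probabilities of long rectangles cannot stay small, contradicting the hypothesis.'' Concretely, for part (a), suppose $\Puh(V(3n,n)) < \hat\ep$ for some $n \geq \hat N$. By Lemma \ref{fs-lem}(a) the distribution of $|C^+|$ has an exponential tail, so in particular $\Puh(|C^+| = \infty) = 0$. First I would argue that in this regime the probability of a $+$ crossing of a large box tends to $0$: a horizontal $+$ crossing of $[0,s]\times[0,s]$ forces a $+$ path of diameter at least $s$, hence a $+$ cluster of size at least $s$, so $\Puh(H(s,s)) \leq \Puh(|C^+| \geq s) \to 0$ as $s \to \infty$.

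Next I would feed this into the RSW machinery. Since $\lim_{s\to\infty}\Puh(H(s,s)) = 0$, i.e.\ the limit is $0$ for the aspect ratio $\rho = 1$, Lemma \ref{RSW-lem}(a) gives $\lim_{n\to\infty}\Puh(H(\rho n,n)) = 0$ for \emph{all} $\rho > 0$; in particular, taking a box that is much longer in the vertical direction and using the rotation symmetry from property (iv) (equivalently, applying the statement to a rotated box), one gets $\lim_{n\to\infty}\Puh(V(3n,n)) = 0$. Therefore there certainly exists some $m \geq \hat N$ with $\Puh(V(3m,m)) < \hat\ep$ — but this is a consequence, not yet the desired conclusion. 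To actually produce the infinite $-*$ cluster I would instead run the contrapositive through the \emph{complementary} events: by the duality noted just before Corollary \ref{RSW-cor}, $\Puh(V(3n,n)) < \hat\ep$ means $\Puh(H^{-*}(n,3n)) > 1 - \hat\ep$, so the probability of a $-*$ crossing of a long rectangle (in its easy direction) is close to $1$. Now I would invoke Corollary \ref{RSW-cor}(b): since $\Puh(H^{-*}(\rho n,n))$ has limit close to $1$ along the relevant aspect ratio — more precisely, one first boosts $\Puh(H^{-*}(n,3n)) > 1-\hat\ep$ for a single scale up to $\limsup_n \Puh(H^{-*}(\rho n, n)) > 0$ for some $\rho$ via the RSW lemma applied in the $-*$ model, then to $\lim = 1$ for all $\rho$ by Corollary \ref{RSW-cor}(b) — one obtains that $-*$ crossings of boxes of every shape have probability tending to $1$.

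Finally, from ``$-*$ crossing probabilities of all large boxes tend to $1$'' the existence of an infinite $-*$ cluster with positive probability follows by the standard Peierls/block argument: with the crossing probabilities of $N\times 3N$ and $3N\times N$ rectangles close to $1$ for $N$ large, a renormalization at scale $N$ shows that the event ``the origin's block is part of an infinite connected set of blocks each containing a long $-*$ crossing, with overlapping crossings pasted together'' has positive probability; positive association (Lemma \ref{pa-lem}) and the mixing estimate (Lemma \ref{mix-lem}, which makes widely separated blocks nearly independent, exactly as in the proof of Lemma \ref{fs-lem}) let one compare this block process to a supercritical site-percolation process and conclude $\Puh(|C^{-*}| = \infty) > 0$. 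Part (b) is identical after interchanging the roles of $+$ and $-*$ and using Lemma \ref{fs-lem}(b), Lemma \ref{RSW-lem}(a), and Corollary \ref{RSW-cor}(a).

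The main obstacle I expect is the bookkeeping in the ``boosting'' step: the RSW lemmas as stated convert a $\limsup > 0$ (or $\lim = 0$) for \emph{one} aspect ratio into the same for \emph{all} aspect ratios, but do not by themselves turn a single-scale inequality like $\Puh(V(3n,n)) < \hat\ep$ into a statement about the $\limsup$ or $\lim$; bridging that gap requires the elementary observation that $\Puh(H(s,s)) \leq \Puh(|C^+| \geq s)$ together with the exponential tail from Lemma \ref{fs-lem}, so that the hypothesis is really being used only to activate Lemma \ref{fs-lem}, and the quantitative input to RSW comes from the exponential decay rather than from the single rectangle. Making sure the logical flow is ``hypothesis $\Rightarrow$ exponential tail of $|C^+|$ $\Rightarrow$ $\Puh(H(s,s))\to 0$ $\Rightarrow$ (RSW) all $+$ crossing probabilities $\to 0$ $\Rightarrow$ all $-*$ crossing probabilities $\to 1$ $\Rightarrow$ (block argument) $\Puh(|C^{-*}|=\infty)>0$'' — rather than circularly re-deriving a finite-size criterion — is the one place where care is needed.
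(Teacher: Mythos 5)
Your overall logic is workable, but you take a long detour where the paper takes a single step, and the one genuinely load-bearing step of your detour is left as an assertion. The paper's proof of (a) is: the hypothesis activates Lemma \ref{fs-lem}(a), so $|C^+|$ has an exponential tail; then a direct Peierls/duality argument finishes --- if $C^{- *}$ is finite it is surrounded by a $+$ circuit, and by translation invariance the expected number of $+$ circuits surrounding the origin outside a large box is bounded by a sum of the form $\sum_{m\ge M} c\,m\,\Puh(|C^+|\ge c' m)$, which the exponential tail makes convergent and $<1$ for $M$ large; hence with positive probability no $+$ circuit surrounds the origin and $\Puh(|C^{- *}|=\infty)>0$. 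No RSW and no renormalization are needed; the exponential tail is precisely the summable quantitative input that closes the circuit count (note that your intermediate conclusion ``$\Puh(H(s,s))\to 0$'' alone would \emph{not} suffice for such a count, since one needs decay at all scales simultaneously).

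Your route is: exponential tail $\Rightarrow$ $\Puh(H(s,s))\to 0$ (your inequality $\Puh(H(s,s))\le\Puh(|C^+|\ge s)$ needs a union bound over the $s+1$ possible starting vertices on the left edge, harmless given the exponential decay) $\Rightarrow$ via Lemma \ref{RSW-lem} and Corollary \ref{RSW-cor} all $+$ crossing probabilities tend to $0$ and all $- *$ crossing probabilities tend to $1$ $\Rightarrow$ block argument. The RSW bookkeeping is correct. The problem is the last arrow: ``hard-direction $- *$ crossing probabilities close to $1$ at a large scale, plus near-independence of distant blocks, implies $\Puh(|C^{- *}|=\infty)>0$'' is \emph{not} among the paper's lemmas (Lemma \ref{fs-lem} is the subcritical finite-size criterion, not the supercritical one), and it is where all the work sits: one must run a lattice-animal/Peierls count on the renormalized lattice, with the same mixing corrections as in the proof of Lemma \ref{fs-lem}, to show that bad blocks do not form a blocking circuit. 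So you end up needing a Peierls argument anyway, one level up, after first invoking the full RSW machinery to get there. I would replace everything after ``the distribution of $|C^+|$ has an exponential tail'' by the direct circuit-counting argument; as written, the proposal's hardest step is the one that is only gestured at.
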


\begin{proof}
We only prove part (a) here; the proof of (b) is completely analogous.
If the condition of Corollary \ref{fs-cor} holds, then by Lemma \ref{fs-lem}, the distribution
of $|C^+|$ has an exponential tail. Exactly as in the Peierls argument in ordinary percolation (see e.g. \cite{Gr}),
this implies that the probability that there is a $+$ circuit having $0$ in its interior is
less than 1, and hence that $\Puh(|C^{- *}| = \infty) > 0$.
\end{proof}

\begin{lem}
\label{nce}
If $\Puh(|C^+| = \infty) > 0$, then $\Puh(|C^{- *}| = \infty) = 0$.
\end{lem}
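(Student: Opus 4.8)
The plan is to prove the standard ``uniqueness/coexistence'' dichotomy: if the $+$ cluster is infinite with positive probability, then almost surely there is \emph{no} infinite $-*$ cluster. I would argue by contradiction, so suppose both $\Puh(|C^+|=\infty)>0$ and $\Puh(|C^{-*}|=\infty)>0$. By ergodicity of the field $(\si_v,v\in\Z^2)$ under translations --- which follows from the mixing bound \eqref{mix-eq} in Lemma \ref{mix-lem} (the mixing coefficient tends to $0$, so the translation-invariant field is mixing, hence ergodic) --- the existence of an infinite $+$ cluster is a $0$-$1$ event, and similarly for the infinite $-*$ cluster. Hence, under our contradiction hypothesis, both events have probability $1$: almost surely there is simultaneously an infinite open (ordinary) cluster and an infinite closed $*$-cluster.

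\smallskip\noindent
Next I would use the classical two-dimensional topological fact that an infinite $+$ path and an infinite $-*$ path cannot both exist in a way that avoids a contradiction when we also invoke \emph{uniqueness} of the infinite cluster. First, by the Burton--Keane argument --- which applies here because the field is translation invariant (property (iv)), is positively associated (Lemma \ref{pa-lem}), and has the finite-energy-like irreducibility coming from the finitary i.i.d.\ representation; alternatively one uses directly that the $\si$-field satisfies the hypotheses under which Burton--Keane has been extended to dependent fields with positive association and suitable mixing --- the infinite $+$ cluster is a.s.\ unique, and likewise the infinite $-*$ cluster is a.s.\ unique. Now a unique infinite $+$ cluster and a unique infinite $-*$ cluster cannot coexist in $\Z^2$: by planar duality, an infinite self-avoiding $+$ path would separate the plane into two parts, at most one of which can contain an infinite $-*$ cluster, and by symmetry of the roles of the two ``ends'' together with uniqueness one derives a contradiction --- this is exactly the classical argument of Yu~Zhang / the Gandolfi--Keane--Russo-type reasoning showing incompatibility of two infinite clusters in $\Z^2$ of ``crossing'' type. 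Concretely, consider a large box; if both infinite clusters exist and are unique, then with probability bounded below there are four disjoint crossing clusters of the box (two $+$ and two $-*$, alternating around the boundary), which, by FKG (Lemma \ref{pa-lem}) and the square-root trick, can be made a.s.\ eventually present; but four alternating crossings force a planar crossing contradiction (a $+$ path and a $-*$ path would have to cross).

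\smallskip\noindent
To make the last step clean I would instead phrase it through crossings, using the RSW/finite-size machinery already set up. If $\Puh(|C^+|=\infty)>0$ then by ergodicity and a standard blocking argument the probability of a horizontal $+$ crossing of $[0,3n]\times[0,n]$ does \emph{not} tend to $0$; indeed one shows $\liminf_n \Puh(H(3n,n))>0$, and then Corollary \ref{RSW-cor} and the complementation identity $\Puh(H(k,l))=1-\Puh(H^{-*}(l,k))$ give that $\Puh(V^{-*}(3n,n))\to 0$ does not hold --- wait, I need the other direction, so more precisely: the analogous statement for the $-*$ field, $\Puh(|C^{-*}|=\infty)>0$, forces $\liminf_n\Puh(H^{-*}(3n,n))>0$. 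But $H(n,3n)$ and $V^{-*}(n,3n)$ are complementary, so a positive liminf of $+$ crossings in one direction together with a positive liminf of $-*$ crossings would, via Corollary \ref{RSW-cor} applied to \emph{both} fields, force both $\Puh(H(\rho n,n))$ and $\Puh(H^{-*}(\rho n,n))$ to stay bounded away from $0$ for all $\rho$; taking $\rho$ large and using the mixing bound \eqref{mix-eq} to make long thin crossings of the two types in disjoint regions nearly independent, one produces with positive probability a $+$ crossing of a long horizontal rectangle and a $-*$ crossing of a long vertical rectangle that intersect it --- a planar impossibility. This contradiction proves the lemma.

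\smallskip\noindent
The main obstacle, I expect, is justifying uniqueness of the infinite cluster (the Burton--Keane step) for this \emph{dependent} field: the field has positive association and good mixing, but only a weak ``finite energy'' substitute coming from the i.i.d.\ representation, so one must check carefully that the Burton--Keane trifurcation argument --- or whichever incompatibility argument one uses --- really goes through. If one prefers to avoid Burton--Keane entirely, the cleanest route is the crossing argument of the third paragraph, where the only inputs are ergodicity (from \eqref{mix-eq}), FKG (Lemma \ref{pa-lem}), the RSW consequence (Corollary \ref{RSW-cor}), and mixing (Lemma \ref{mix-lem}) to glue nearly-independent crossings of the two types in disjoint long rectangles into a forbidden planar configuration; the delicate point there is the quantitative choice of box scales so that the mixing error in \eqref{mix-eq} is smaller than the RSW lower bounds on the relevant crossing probabilities.
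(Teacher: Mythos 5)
Your proposal circles the right ideas --- and even names the correct reference in passing --- but neither of the two concrete routes you sketch actually closes. The paper's proof is a one-step verification-and-citation: the law of $(\si_v)$ is translation invariant and reflection invariant (property (iv)), positively associated (Lemma \ref{pa-lem}), and mixing in the ergodic-theoretic sense with respect to horizontal and vertical translations (by Lemma \ref{mix-lem}); these are precisely the hypotheses of the main theorem of Gandolfi--Keane--Russo \cite{GaKeRu}, whose conclusion is exactly the non-coexistence statement of the lemma. No Burton--Keane step and no crossing construction is needed.

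The gaps in your two routes are real, not just cosmetic. For the first route: Burton--Keane requires a finite-energy (insertion/deletion tolerance) property, and a field with a nice finitary representation in the sense of Definition \ref{maindef} need not have it --- the conditional law of $\si_v$ given the rest can in principle be degenerate. You flag this yourself as ``the main obstacle'' but do not resolve it, and in this generality it cannot be resolved; this is why the paper goes through \cite{GaKeRu}, whose argument replaces finite energy by planarity, symmetry and ergodicity. For the second route: the planar contradiction needs a $+$ crossing and a $-*$ crossing to occur \emph{simultaneously} in \emph{overlapping} rectangles, but positive association only lower-bounds the joint probability of events of the \emph{same} monotonicity, and the mixing bound \eqref{mix-eq} only controls events supported on \emph{well-separated} regions --- exactly the configuration you need is the one neither tool reaches. (Your phrase ``crossings of the two types in disjoint regions'' gives the game away: if the regions are disjoint, planarity yields no contradiction.) Making this route work essentially amounts to reproving \cite{GaKeRu}, which is a genuinely nontrivial argument, not a routine gluing.
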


\begin{proof}
There are various standard ways to prove this. One is as follows. The law of
$(\si_v, v \in \Z^2)$, is positively associated (by Lemma \ref{pa-lem}), translation invariant,
invariant under horizontal axis reflection and vertical axis reflection ((property
(iv) in section 2.1) and mixing (in the ergodic-theoretic sense, w.r.t. horizontal translations as well
as to vertical translations). The last follows from Lemma \ref{mix-lem}.
Hence, by the main result in \cite{GaKeRu}, 
$\Puh(|C^{- *}| = \infty) = 0$.
\end{proof}

\end{subsection}
\end{section}
\begin{section}{Proof of Theorem \ref{mainthm}}
We use the notation $\th(h)$ for $\Puh(|C^+| = \infty)$. \\
Let 
$$h_c := \sup\{h \, : \, \th(h) = 0\}.$$
It is quite easy to see that $h_c < \infty$: Take $n \geq \hat N$, with
$\hat N$ as defined in Lemma \ref{fs-lem}. From properties (b), (i) and (iii) in Section
2.1 it follows that for all $v \in \Z^2$, $\Puh(\si_v = +1) \ra 1$ as $h \ra \infty$, and
hence that $\Puh(H(3 n,n)) \ra 1$ as $h \ra \infty$, which is equivalent to
$\Puh(V^{- *}(3 n, n)) \ra 0$ as $h \ra \infty$. So there is an $h$ such that 
$\Puh(V^{- *}(3 n, n)) < \hat\ep$, with $\hat\ep$ as in Corollary \ref{fs-cor}. By part
(b) of that Corollary $\th(h) > 0$ for such $h$.
Hence, we have indeed that
$h_c < \infty$. With analogous arguments it follows that $h_c > -\infty$. \\

\smallskip\noindent
\begin{proof}
Now we start with the proof of part (a) of Theorem 2.2, where we will use the following notation.
$B(n)$ denotes the square $[-n,n]^2$, and $\partial B(n)$ its boundary (the set of all vertices $v$ that are
not in $B(n)$ but for which there is a $w \in B(n)$ with $||v-w|| = 1$). For $n \leq m$, $A(n,m)$ denotes
the annulus $B(m) \setminus B(n)$. For $v \in \Z^2$ and $n \in N$, $B(v;n)$ will denote the set $B(n)$ shifted
by $v$. 

Let $h$ be larger than the above defined
$h_c$. So $\Puh(|C^+| = \infty) > 0$. We will first show that

\begin{equation}
\label{sq1}
\Puh(H(n,n)) \ra 1 \mbox{ as } n \ra \infty.
\end{equation}

This is done in a quite standard way: Let $\de >0$.
Take $K$ so large that 

\begin{equation}
\label{K-cond}
\Puh(B(K) \ra \infty) > 1 - \de.
\end{equation}
By Lemma \ref{nce} we can take $N > K$ so large that

\begin{equation}
\label{circNK}
\Puh(\exists \mbox { a } + \mbox{ circuit in } A(K,N) \mbox{ surrounding } B(K)) > 1 - \de.
\end{equation}

For all $n \geq N$ the following holds. First,
by \eqref{K-cond} we have, of course, that $\Puh(B(K) \lra \partial B(n)) > 1 - \de$. Since our
model has the positive association property (see Lemma \ref{pa-lem}) we can apply the usual `square root trick'
(see e.g. \cite{Gr}),
which gives that $\Puh(B(K) \lra r(B(n))) > 1 - \de^{1/4}$, where
$r(B(n))$ stands for the right side $\{n\} \times [-n,n]$ of $B(n)$. By this, and its analog for the left
side $l(B(n))$ of $B(n)$,
together with \eqref{circNK} (and again positive association) we get, for all $n \geq N$,

\begin{eqnarray}
\, & \, & \Puh(H(n,n))  \\ \nonumber 
\, & \geq & \Puh( B(K) \lra r(B(n)), \, B(K) \lra l(B(n)), + \mbox{ circuit in } A(K,n)) \\ \nonumber
\, & \geq &  (1 - \de^{1/4})^2 (1 - \de).
\end{eqnarray}
Since we can take $\de$ arbitrary small, \eqref{sq1} follows.

Application of Corollary \ref{RSW-cor} now gives that 
$\Puh(H(3 n, n)) \ra 1$, as $n \ra \infty$, and hence that 
$\Puh(V^{- *}(3 n,n)) \ra 0$ as $n \ra \infty$.

Finally,  by part (b) of Lemma \ref{fs-lem} this implies that the distribution of
$|C^{- *}|$ has an exponential tail. This completes the proof of Theorem 2.2 (a).

It is important to note that part (b) of the Theorem can not simply be concluded 
by replacing $`+'$ by $`- *$ (and v.v.) in the arguments above. The problem is that our definition of
$h_c$ in the beginning of the proof is `asymmetric'. If we could show that the above defined $h_c$ is equal to
$\inf\{h \, : \,  \Puh(|C^{- *}| = \infty) = 0\}$, or equivalently (since we already know by 
Lemma \ref{nce} that there is no $h$ for which both $\th(h) >0$ and $\th^{- *}(h) >0$) that 
$\th^{- *}(h) > 0$ for all $h < h_c$, we would be able to conclude (b) by exchanging $+$ and $- *$ in the
arguments of (a).
Below it will be shown, using the approximate zero-one laws in Section \ref{sect-azol}, that
indeed $\th^{- *}(h) > 0$ for all $h < h_c$.

\smallskip\noindent
{\it Proof of (b)} 

\smallskip\noindent
Suppose there is a $h_1 < h_c$ with 
$\thsm(h_1) = 0$. We will show that this leads to a contradiction.
Let $h_2 \in (h_1, h_c)$. Then, for all $h \in [h_1, h_2]$, by monotonicity (see properties (a) and (i) in
Section \ref{term})
$\th(h) = \thsm(h) = 0$.
Let $H(n,m)$  and $\Hsm(n,m)$ be the box crossing events defined in 
Section \ref{rsw}.
Since $\thp \equiv 0$ on $[h_1,h_2]$, we have, by Corollary \ref{fs-cor} (b) (noting that
$\Puh(V^{- *}(3 n,n)) = 1 - \Puh(H(3 n,n))$ that

\begin{equation}
\label{fsc-b}
\forall h \in [h_1,h_2] \,\, \forall n \geq \hat N, \,\, P^{(h)}(H(3 n,n)) < 1 - \hat\ep, 
\end{equation}
with $\hat \ep$ and $\hat N$ as in Lemma \ref{fs-lem}. \\
On the other hand, $P^{(h_1)}(H(n, 3 n)) = P^{(h_1)}(V(3 n, n))$, which (again by Corollary \ref{fs-cor}, and
because $\th^{- *}(h_1) = 0$) 
is at least $\hat \ep$ for all $n \geq \hat N$.
Hence, by Lemma \ref{RSW-lem},

$$\limsup_{n \ra \infty} P^{(h_1)}(H(3 n, n)) >0.$$
Using this, monotonicity and \eqref{fsc-b} it follows straightforwardly that there is a $\de \in (0,1)$
and an infinite sequence $n_1 < n_2 < n_3 \cdots$, such that

\begin{equation}
\label{contr-1}
\Puh(H(3 n_i,n_i)) \in (\de, 1-\de), \mbox{ for all } i \mbox{ and all } h \in [h_1,h_2].
\end{equation}

To reach a contradiction, we will show that the sequence $(\ep_n)$, defined by
$$\ep_n := \sup_{j \in I,\,  h \in [h_1,h_2]} \Puh\left((H(3 n,n))_j\right)$$
satisfies

\begin{equation}
\label{contr-2}
\ep_n \ra 0, \,\, \mbox{ as } n \ra 0,
\end{equation}

where, as before (Section 3.1), $A_j$ denotes the event that $j$ is an internal pivotal for the event
$A$.  

\smallskip\noindent
{\bf Remark:}
It is important to note that here we do not (as in ordinary percolation and in Higuchi's treatment)
consider pivotality in terms of the vertices of the lattice (the indices of the $\si$ variables), but in
terms of the indices of the underlying
$X$ variables (that is, in the special case of the Ising model, the space-time variables
$Y_v(t)$ in Section \ref{spc}, which control the updates in the dynamics).

\smallskip\noindent
We will first show that \eqref{contr-1} and \eqref{contr-2} indeed give a contradiction:
First define, for all positive integers $n, m, N$, the event
$$H(n,m)(N):= H(n,m) \cap \{[0,n] \times [0,m] \mbox{ is } N \mbox{ determined }\}.$$ 
It is easy to see from the definitions that $H(n,m)(N)$ is increasing, and (by \eqref{lconc})
that for fixed $n, m$ the sequence $H(n,m)(N)$, $N = 1, 2, \cdots$ strongly approximates the
event $H(n,m)$ in the sense of Definition \ref{def-appr}.
By Corollary \ref{tal1.3xx} we thus have, for all $i = 1, 2, \cdots,$

$$P^{(h_1)}(H(3 n_i, n_i)) \left(1 - P^{(h_2)}(H(3 n_i, n_i))\right) \leq 
\ep_{n_i}^\frac{c(h_1,h_2) (h_2 - h_1)}{K_2},$$
with 
$\ep_{n_i}$ as defined above.

By \eqref{contr-2}, the r.h.s. in this last inequality goes to $0$ as $n \ra \infty$. However, for
all $i$ the l.h.s. is at least $\de^2$ by \eqref{contr-1}: a contradiction.

So part (b) of the theorem is proved once we prove \eqref{contr-2}, which we will do now:
In the following, $X$ stands for the collection of random variables $(X_i, i \in I)$.

Note that, by the definition of internal pivotal,

\begin{equation}
\label{contr2-1}
\Puh\left((H(3 n,n))_j \right) = P\left(X \in H(3 n, n), \, X^{(j)} \not\in H(3 n, n)\right),
\end{equation}
where $X^{(j)}$ is the element of $\{0, \cdots,k\}^I$ that satisfies 
$X^{(j)}_i = X_i$ for all $i \neq j$ and $X^{(j)}_j = 0$.

Now recall that for each $v \in \Z^2$ we have the sequence $i_1(v), i_2(v), \cdots$ 
introduced in property (ii) in Section 2.1. We will use the following terminology. 
If $i_m(v) = j$, we say that $j$ has rank $m$ for $v$. If $j$ does not occur at all in
the sequence $i_1(v), i_2(v), \cdots$, we say that the rank of $j$ for $v$ is infinite.
The rank of $j$ for $v$ will be denoted by $r_v(j)$.
Suppose that $r_v(j) = m$. Then we say that $v$ {\it needs} $j$ if
$(X_{i_1(v)}, X_{i_2(v)}, \cdots, X_{i_{m-1}}(v))$ does not determine $\si_v$.

Let $v$ be a vertex in the box $[0, 3 n] \times [0,n]$.
We use the notation $H(3 n,n;\, v)$ for the event that $v$ is on a horizontal $+$ crossing of
that box.
Using the terminology and observation above, we have that
the r.h.s. of \eqref{contr2-1} is at most 

\begin{eqnarray}
\label{contr2-2}
\, & \, &  P\left(\exists v \in \Z^2 \mbox{ s.t. } X \in H(3 n,n;\, v), \mbox{ but } 
X^{(j)} \not \in H(3 n,n;\, v)\right) \\ \nonumber
\, & \leq & \sum_{v \in \Z^2} P(X \in H(3 n,n;\, v),\, X^{(j)} \not \in H(3 n,n;\, v))
\\ \nonumber 
\, & \leq & \sum_{v \in \Z^2} \Puh( H(3 n,n;\, v), \, v \mbox{ needs } j) \\ \nonumber
\, & \leq & \sum_{v \in \Z^2} \min(\Puh(H(3 n,n;\, v)), \, \Puh(v \mbox{ needs } j)).
\end{eqnarray}

Further, note that if $v$ is on a horizontal $+$ crossing of the rectangle
$[0,3 n] \times [0, n]$, there must be a $+$ path from $v$ to $\partial B(v;n)$.
By this, and translation invariance (property (iv) in Section
2.1), the first of the two probabilities in the expression in the summand
in the last line of \eqref{contr2-2} (that is, $\Puh(H(3 n, n; v))$) is at most
$\Puh(0 \lra \partial B(n))$, which by monotonicity is of course at most 
$P^{(h_2)}(0 \lra \partial B(n))$. Let us denote this last probability by $f(n)$.
Also note that property (ii) in Section 2.1 says that

$$\Puh(v \mbox{ needs } j) \leq \frac{C_0}{(r_v(j) - 1)^{2+ \ga}}.$$

These considerations imply that the last line of \eqref{contr2-2}
is, for each positive integer $K$, at most

\begin{equation}
\label{contr2-3}
f(n) \times |\{v \in \Z^2 \, : \, r_v(j) \leq K \}| 
+ C_0 \sum_{k = K}^{\infty} \frac{|\{v \, : \, r_v(j) = k\}|}{(k-1)^{2 + \ga}}.
\end{equation}

Consider the set in the first term in \eqref{contr2-3}. Let $u$ and $w$ be two vertices which both belong to this set. That is, $r_u(j) \leq K$ and $r_w(j) \leq K$ hold, and hence
the sets $\{i_1(u), \cdots, i_K(u)\}$ and $\{i_1(w), \cdots, i_K(w)\}$ have non-empty
intersection. It follows from property (iii) in Section 2.1 that $||v - w||$ is 
at most $K/\alpha$. Hence, the set under consideration has diameter $\leq K/\al$, and 
hence the cardinality of this set satisfies

\begin{equation}
\label{card-ineq}
\vert \{v \in \Z^2 \, : \, r_v(j) \leq K \}\vert \leq \frac{C_9 K^2}{\al^2},
\end{equation}
for some constant $C_9$.

From this (and using that $(k-1)^{2 + \ga}$ is decreasing in $k$) it is easy to see that the sum in 
\eqref{contr2-3} satisfies

\begin{equation}
\label{extr-eq}
\sum_{k = K}^{\infty} \frac{|\{v \, : \, r_v(j) = k\}|}{(k-1)^{2 + \ga}} \leq  \frac{C_{10}}{\al^2 K^\ga}
+ \sum_{k = K + 1}^{\infty} \frac{C_{10}}{\al^2 k^{1 + \ga}},
\end{equation}
for some constant $C_{10}$.

Note that in \eqref{contr2-3} we are free to choose $K$. In the following we let 
$K(n)$ be the largest integer $k$ for which

$$\frac{C_9 k^2}{\al^2} \leq \frac{1}{\sqrt{f(n)}}.$$

Taking together \eqref{contr2-1} - \eqref{extr-eq} we get, choosing $K = K(n)$ in
\eqref{contr2-3},

\begin{equation}
\label{contr2-4}
\Puh\left((H(3 n,n))_j\right)  \leq   f(n) \frac{1}{\sqrt{f(n)}}  + \frac{C_{10}}{\al^2 K(n)^{\ga}} 
+ \sum_{k = K(n) + 1}^{\infty} \frac{C_{10}}{\al^2 k^{1 + \ga}}.
\end{equation} 

Note that the r.h.s. of \eqref{contr2-4} does not depend on $j$ and $h$, and
(since $f(n) \ra 0$ as $n \ra \infty$, $\ga > 0$, and $K(n) \ra \infty$ as $n \ra \infty$)
goes to $0$ as $n \ra \infty$. 
This proves \eqref{contr-2}, and thus
completes the proof of the first statement in  part (b) of the Theorem. The second statement of part (b) follows
now in exactly the same way as its analog in (a).
\end{proof}

\end{section}
{\large \bf Acknowledgments} \\
My interest in some of the problems in this paper was raised by communication with
Frank Redig and Federico Camia. I also thank Rapha\"el Rossignol for his valuable
comments on Corollary \ref{tal1.3xx}.

\end{document}